\documentclass[a4paper]{article}
\usepackage{amsmath,amsfonts,amsthm,braket,mathrsfs}
\usepackage[margin=26mm]{geometry}

\newtheorem{thm}{Theorem}[section]

\theoremstyle{definition}
\newtheorem{defn}[thm]{Definition}

\newtheorem{prop}[thm]{Proposition}
\newtheorem{lem}[thm]{Lemma}

\theoremstyle{remark}
\newtheorem{rem}[thm]{\bf Remark}

\numberwithin{equation}{section}

%\documentclass[a4paper]{jarticle}
% として jarticle を使う場合は、上の \usepackage の部分を
%\usepackage[jslayout]{syuron2}
% のように [jslayout] を追加する。

\title{Attractors and their stability with respect to rotational inertia for nonlocal extensible beam equations
$$
%\left(
%\begin{tabular}{l}
%梁方程式における回転慣性がアトラクター\\
%\hspace{\fill}に与える影響の考察\hspace{\fill}
%\end{tabular}
%\right)
$$}
\author{Takayuki Niimura}
\date{2018.5} 

\begin{document}

\maketitle

%\tableofcontents

\subsection*{Abstract}
In this paper we consider the nonlinear beam equations accounting for rotational inertial forces. Under suitable hypotheses we prove the existence, regularity and finite dimensionality of a compact global attractor and an exponential attractor. The main purpose is to trace the behavior of solutions of the nonlinear beam equations when the effect of the rotational inertia fades away gradually. A natural question is whether there are qualitative differences would appear or not. To answer the question, we deal with the rotational inertia with a parameter $\alpha$ and consider the difference of behavior between the case $0<\alpha\le1$  and the case $\alpha=0$. The main novel contribution of this paper is to show the continuity of global attractors and exponential attractors with respect to $\alpha$ in some sense.

%show that the fractional rotational term $(-\Delta)^{\theta} u_{tt}$ hardly produces influences on the asymptotic behavior of solutions on the suitable conditions. That is, all results are obtained by moving $\alpha\in[0,1]$, uniformly, and we show the continuity of attractors $\mathcal{A}_{\alpha,\theta}$ and $\mathcal{A}_{\exp,\alpha,\theta}$ with respect to $\alpha$, in some sense.

\section{Introduction}
In this paper we consider the following models of extensible beams with rotational forces
\begin{equation}
(1+\alpha (-\Delta)^{\theta})u_{tt}+\Delta ^2u-M\biggl(\int_{\Omega}|\nabla u|^2dx\biggl)\Delta u=\mathcal{F}(x,u,u_t,\Delta u_t), 
\label{eq:1}
\end{equation}
where $M$ is a scalar function, $\mathcal{F}$ represents additional damping and forcing terms. When the parameter $\alpha>0$ and $\theta=1$, the rotational inertial momenta of the elements of the beam is taken into account. On the other hand, when $\alpha=0$, the kinetic effect of the moment is neglected.

The equation (\ref{eq:1}) with $\alpha=0$, that is
\begin{equation}
u_{tt}+\Delta ^2u-M\biggl(\int_{\Omega}|\nabla u|^2dx\biggl)\Delta u=\mathcal{F}(x,u,u_t,\Delta u_t),
\label{eq:moge}
\end{equation}
 has been extensively studied in different contexts since 1950. Woinowsky-Krieger \cite{S. Woinowsky-Krieger} proposed it in the one-dimensional case as a model which describes transverse deflection of an extensible beam, by taking $\mathcal{F}=0$ and $M(s)=as+b$, where $a, b$ are positive constants related to the forces applied on the system. See also the papers by Eisley \cite{J. G. Eisley}, Dickey \cite{Dickey:1} and Ball \cite{Ball:1} for more physical interpretations on extensible beam models. 
 
 From 1970, other pioneering works related to extensible beams can be found in Ball \cite{Ball:2}, Dickey \cite{Dickey:2}, Medeiros \cite{L. A. Medeiros}, Brito \cite{Brito}, and Biler \cite{Biler}. Essentially, these authors studied existence, uniqueness, regularity and stability of solutions. Ever since, several kinds of problems related to these types of equations have been considered by many researchers. Before getting to the main topic, we report a short survey on vibration of extensible beams by pointing out some interesting results on the model (\ref{eq:moge}).
 
 Regarding the existence of the decaying solution, Brito \cite{Brito} established the exponential decaying solutions when $\mathcal{F} = -\delta u_t\ (\delta>0)$. Also, Kou\'{e}mou Patcheu \cite{S. Kouemou Patcheu} studied asymptotic behavior of solutions with nonlinear damping $\mathcal{F} = -g(u_t)$. Moreover, Vasconcellos and Teixeira \cite{C. F. Vasconcellos and L. M. Teixeira} and Cavalcanti et al. \cite{C} studied the higher-dimensional cases, by considering nonlinear source and damping terms $\mathcal{F}=-f(u)-g(u_t)$ and a nonnegative $C^1$-function $M$. We also refer to the papers \cite{Biler}, \cite{Dickey:2}, \cite{H. Lange and G. Perla Menzala} and \cite{M. M. Cavalcanti V. N. Domingos Cavalcanti and T. F. Ma}, which are related to these works.
 
 %In higher-dimensional systems which cover (\ref{eq:moge}), both Vasconcellos and Teixeira [29] and Cavalcanti et al.[12] studied the global existence, uniqueness and stabilization with the clamped boundary condition, by considering nonlinear damping and source terms $\mathcal{F}=-f(u)-g(u_t)$ and a nonnegative $C^1$-function $M$. 
 
 With regard to long-time behavior for dynamical systems generated by (\ref{eq:moge}), Eden and Milani \cite{A. Eden and A. J. Milani} established the existence of exponential attractors to (\ref{eq:moge}) with both hinged and clamped boundary conditions, by taking $M$ to be a given linear function and $\mathcal{F}=-u_t+h(x)$. Biazutti and Crippa \cite{Biazutti} showed the existence of global and exponential attractors to (\ref{eq:moge}) with clamped boundary condition, by assuming that $M\in C^1([0,\infty))$ is nonnegative and $\mathcal{F}=-\kappa(-\Delta)^{\theta}u_t+h(x),\ \kappa>0,\ 0\le\theta\le1.$ Ma and Narciso \cite{T. F. Ma and V. Narciso} proved the existence of a global attractor for (\ref{eq:moge}) under essentially the same conditions as in \cite{C}, with $\mathcal{F}=-f(u)-g(u_t)+h(x)$. Also, Yang \cite{Y. Zhijian} studied the equation (\ref{eq:moge}) with both hinged and clamped boundary conditions, under weaker conditions on the function $M$ and on the nonlinear damping and source terms $\mathcal{F}=-f(u)-g(u_t)+h(x).$ He proved the existence of finite-dimensional global and exponential attractors, by assuming that the growth exponent $p$ of the nonlinear source term $f(u)$ is supercritical but is dominated by the growth exponent $q$ of the nonlinear damping $g(u_t)$.
 Also, Jorge Silva and Narciso \cite{Jorge Silva and Narciso V} showed the existence of a global attractor and an exponential attractor to (\ref{eq:moge}) with the supported boundary condition and the initial condition and nonlinear fractional damping term $N(\int_{\Omega}|\nabla u|^2dx)(-\Delta)^{\theta} u_t$ $(0\le\theta\le1)$. Our study is motivated by their works.
  
    %However, no concrete examples on the function $M$ was presented in [28] beyond the linear case discussed in his introduction.
 
 %We also refer to some works related to the singular perturbation problems. Zelati []  
 
 %Other related problems with internal damping were considered e.g. in [14, 20]. We also refer to the paper [10] for beam models taken into account of dissipation on the boundary.

 Our main goal in this paper is to study the effect of the fractional rotational inertia on the long-time dynamics. To be more specific and clear, we examine the continuity of attractors when $\alpha\to0$, that is, we show that the family of global attractors $\mathcal{A}_{\alpha,\theta}$ and exponential attractors $\mathcal{A}_{\exp,\alpha,\theta}$ are continuous with respect to the parameter $\alpha$ in some sense. Attractors are one of the main objects arising in the study of the asymptotic behavior of infinite-dimensional dissipative dynamical systems and we are able to answer fundamental questions on the properties of limit regimes by studying them. Thus, if attractors $\mathcal{A}_{0.\theta}$ and  $\mathcal{A}_{\exp,0,\theta}$ are stable with respect to the rotational inertia, we can say that the long-time behavior of each system generated from the problem (\ref{eq:1}) is similar in a sense. We analyze the effect of the fractional rotational term on the long-time dynamics by showing these properties and we present the results in Theorem \ref{thm:4.2}.

 %longterm dynamics of model (\ref{eq:1}), with regard to the rotaional inertia given by the parameter $\alpha$. 
 
 Before getting into the main subject, we consider the well-posedness and long-time dynamics to the following nonlocal equation related to an extensible beam with nonlinear damping and source terms
\begin{equation}
(1+\alpha (-\Delta)^{\theta})u_{tt}+\Delta ^2u-M\biggl(\int_{\Omega}|\nabla u|^2dx\biggl)\Delta u+N\biggl(\int_{\Omega}|\nabla u|^2dx\biggl)(-\Delta)^{\theta'} u_t +f(u)=h
\label{eq:1.3}
\end{equation}
in $\Omega\times(0,\infty)$, where $\Omega\subset\mathbb{R}^n$ is a bounded domain with smooth boundary $\partial\Omega$, $0\le\alpha\le1$, $0\le\theta\le\theta'\le1$, $M$ and $N$ are scalar functions specified later, $f(u)$ is a nonlinear source term and $h$ is an external forcing term. We consider the equation (\ref{eq:1.3}) with the simply supported boundary condition
\begin{equation}
u=\Delta u=0\quad\text{on}\quad[0,\infty)\times\partial\Omega,
\label{eq:4}
\end{equation}
and initial conditions
\begin{equation}
u(\cdot,0)=u_0(\cdot),\ u_t(\cdot,0)=u_1(\cdot)\quad\text{in}\quad\Omega.
\label{eq:5}
\end{equation}

\noindent
Our concrete aim is further separated into two parts as follows. The first one is to find the bounded set $\mathcal{B}$, which is an absorbing set for all system $(\mathcal{H}_{\alpha,\theta},S_{\alpha,\theta}(t))$ generated from the problem (\ref{eq:1.3}). The second one is to derive an inequality for each system $(\mathcal{H}_{\alpha,\theta},S_{\alpha,\theta}(t))_{\alpha\in[0,1]}$, which is called stability inequality in this paper and leads to a decomposition of a contraction operator part and compact operator part, roughly speaking. These two facts allow us to show the existence of global attractor and construct exponential attractor for each $\alpha\ge0$. The important point here is that we find that the radius of global attractors $\mathcal{A}_{\alpha,\theta}$ are estimated uniformly with respect $\alpha$ by the first fact, because we show the continuity of the global attractors with respect to $\alpha$ by using the sequentially compactness arguments. To attain these aims, we follow the argument due to \cite{Jorge Silva and Narciso V}, where an appropriate perturbation of the energy was introduced. But we need to modify the way of perturbation for handling the rotational inertia.

 Now we proceed to the consideration for the transition from the case $0<\alpha\le1$ to the case $\alpha=0$, but keeping the other conditions on nonlinear terms. In this part we firstly prove two facts to reach our goal. The first one is the continuity of the semiflow with respect to $\alpha\in[0,1]$. The second one is that the full trajectory which belongs to the global attractors has better regularity than indicated by the topology of the phase space. The second fact is particularly important because compactness arguments in the proof for the upper semi continuity of global attractors is based on the Rellich-Kondrachov compactness theorem. Lastly, we reveal the upper semi continuity for global attractors $\mathcal{A}_{\alpha,\theta}$ and construct the family of exponential attractors $\mathcal{A}_{\exp,\alpha,\theta}$, which is continuous at the point $\alpha=0$.

 %It is a natural question whether there are qualitative differences between the case $0<\alpha\le1$ and the case $\alpha=0$. In other words, we believe that it is interesting to trace the behavior of solutions of (\ref{eq:1.3}) when $\alpha\to 0$. The main novel contribution of this paper is to deal with the rotational inertia with a parameter $\alpha$ and to consider the difference of behavior between the case $0<\alpha\le1$  and the case $\alpha=0$. 
 
% We can extend  the result A to the case B easily, thus we omit the case B.

   The remainder of this paper is organized as follows. In Section 2 we introduce some notations on the function spaces and operators, and state our results. Section 3 is devoted to show that the problem (\ref{eq:1.3})-(\ref{eq:5}) is well-posed. In Section 4 we review the basic terminologies and definitions of infinite-dimensional dynamical system and examine the existence of the global attractors and construct the exponential attractors. Finally, Section 5 is dedicated to the proof of the stability with respect to rotational inertia.     

\section{Preliminaries}
\subsection{Notations and definitions}
We first introduce some notation concerning the function spaces and operators that will be used throughout the remainder of this paper. 

We denote by $L^2(\Omega)$ the set of square integrable functions with the usual $L^2$-inner product $(\cdot,\cdot)$ and by $L^p(\Omega)$ the set of p-th power integrable functions with the usual $L^p(\Omega)$-norm $\|\cdot\|_p$. We set $V=H^2(\Omega)\cap H^1_0(\Omega)$ with the inner product $(\Delta\cdot,\Delta\cdot)$ and the norm $\|\Delta\cdot\|_2$. %Since the embedding $V\hookrightarrow L^2(\Omega)$ is compact if we define the operator $A$ by the triplet $\{ L^2(\Omega),V,(\Delta\cdot,\Delta\cdot) \}$, then it is well-known that $A$ is a linear positive self-adjoint operator with the domain
%\begin{center}
%$D(A)=\{ u\in H^4(\Omega)\cap H^1_0(\Omega)\ |\ \Delta u=0 \ $on$ \ \partial \Omega \}$ \ and \ $Au=\Delta^2u, \ u\in D(A).$
%\end{center}

We define the operator $A:D(A)\to L^2(\Omega)$;
\begin{equation*}
Au:=\Delta^2u\quad\text{with the domain}\quad D(A):=\{u\in H^4(\Omega)\ |\ u,\Delta u\in H^1_0(\Omega)\},
\end{equation*}
where $\Delta$ is the Laplace operator with the Dirichlet boundary condition. Obviously, $A$ is self-adjoint in $D(A)$ and strictly positive on $D(A)$. Hence as is well-known $A$ has the inverse operator with the domain $L^2(\Omega)$ and it is compact. %the imbedding $D(A)\hookrightarrow L^2(\Omega)$ is compact. 
 Thus from the spectral theory there exists an orthonormal basis $(\omega_j)_{j\in\mathbb{N}}$ in $L^2(\Omega)$ composed by eigenfunctions of $A$ such that
\begin{center}
$A\omega_j=\lambda_j\omega_j$ \ \ with \ \ $0<\lambda_1\le\lambda_2 \dots $ and \ $\lambda_j\to \infty$ \ as \ $j\to\infty$.
\end{center}
Moreover, we can define the fractional powers $A^s,s\in \mathbb{R}$, of $A$ with domains $D(A^s)$ being Hilbert spaces with the inner products and the norms defined by
\begin{center}
$(u,v)_{D(A^s)}=(A^su,A^sv)$ \ \ and \ \ $\|u\|_{D(A^s)}=\|A^su\|_2,\ \ u,v\in D(A^s)$.
\end{center}
The embedding $D(A^{s_1})\hookrightarrow D(A^{s_2})$ is continuous if $s_1\ge s_2$, is compact if $s_1>s_2$, and it holds that
\begin{center}
$\|A^{s_2}u\|_2\le\lambda^{s_2-s_1}_1\|A^{s_1}u\|_2, \ \ u\in D(A^{s_1})$.
\end{center}
In particular, one has $D(A^0)=L^2(\Omega),\ D(A^{1/4})=H^1_0(\Omega),$ and
\begin{center}
$D(A^{1/2})=H^2(\Omega)\cap H^1_0(\Omega)$ \ \ with \ \ $A^{1/2}u=-\Delta u,\ \ u\in D(A^{1/2})$.
\end{center}
Then we can convert the concrete system (\ref{eq:1.3}) to an abstract evolutional problem given by
\begin{eqnarray}
\begin{split}
&(1+\alpha A^{\theta/2})u_{tt}+Au+M(\|A^{1/4}u\|^2_2)A^{1/2}u+N(\|A^{1/4}u\|^2_2)A^{\theta'/2}u_t +f(u)=h,\\
&(u(0),u_t(0))=(u_0,u_1).\label{eq:2.1}
\end{split}
\end{eqnarray}

\noindent
The long-time dynamics of (\ref{eq:2.1}) is considered on a Hilbert space $\mathcal{H}_{\alpha, \theta}:=D(A^{1/2})\times H_{\alpha, \theta}$ equipped with the norm
\begin{equation*}
\|(u,v)\|^2_{\mathcal{H}_\alpha, \theta}:=\|A^{1/2}u\|^2_2+\|v\|^2_{H_{\alpha, \theta}},%\|v\|^2_2+\alpha\|A^{1/4}v\|^2_2$.
\end{equation*}
where if $\alpha>0$, $H_{\alpha, \theta}$ is the Hilbert space $H^1_0(\Omega)$ with the norm
%\[ H_{\alpha}:=L^2(\Omega)+\sqrt{\mathstrut\alpha} H^1_0(\Omega)\quad\text{and}\quad
\[ \|v\|^2_{H_{\alpha, \theta}}:=\|v\|^2_2+\alpha\|A^{\theta/4}v\|^2_2, \]
and if $\alpha=0$, $H_{0\theta}$ is the Hilbert space $L^2(\Omega)$.\\

Now we  give the definitions of {\bf global attractor}, {\bf minimal attractor}, {\bf fractal dimension}, {\bf exponential attractor} and {\bf unstable manifold}.

\begin{defn}
Let $X$ be a complete linear metric space. A bounded set $A\subset X$ is said to be a {\bf global attractor} of the dynamical system $(X,S(t))$ if and only if the following properties hold:
\renewcommand{\theenumi}{\roman{enumi}}
\begin{enumerate}
\item $A$ is an invariant set; that is, $S(t)A=A,\ \  \forall t\ge0$.
\item $A$ is uniformly attracting; that is, for all bounded set $D\subset X$ 
\[ \lim_{t\to\infty}h(S(t)D,A)=0, \]
where $h(A,B)=\sup_{x\in A}\inf_{y\in B}\|x-y\|_X$ is the Hausdorff semidistance.
\end{enumerate}
\end{defn}

\begin{defn}
Let $X$ be a complete linear metric space. A bounded set $A_{\text{min}}\subset X$ is said to be a {\bf minimal attractor} of the dynamical system $(X,S(t))$ if and only if the following properties hold:
\renewcommand{\theenumi}{\roman{enumi}}
\begin{enumerate}
\item $A_{\text{min}}$ is a positively invariant set; that is $S(t)A_{\text{min}}\subseteq A_{\text{min}},\ \  \forall t\ge0$
\item $A_{\text{min}}$ attracts every point $x$ in $X$; that is,
\[ \lim_{t\to\infty}\text{dist}_X(S(t)x,A_{\text{min}})=0\ \ \text{for any}\  x\in X; \]
\item $A_{\text{min}}$ is minimal; that is, $A_{\text{min}}$ has no proper subsets possessing the above properties.
\end{enumerate}
\end{defn}

\begin{defn}
Let $X$ be a complete linear metric space and $K$ be a compact set in $X$. The {\bf fractal} (box-counting) {\bf dimension} $\dim_f K$ of $K$ is defined by the formula
\begin{equation*}
\dim_f K := \limsup_{\epsilon\to0}\frac{\log n(K,\epsilon)}{\log(1/\epsilon)},
\end{equation*}
where, $n(K,\epsilon)$ is the minimal number of closed sets of the radius $\epsilon$ that cover $K$.\\

\end{defn}

\begin{defn}
Let $X$ be a complete linear metric space. A compact set $\mathcal{A}_{\rm exp}\subset X$ is said to be a {\bf exponential attractor} of the dynamical system $(X,S(t))$ if and only if $\mathcal{A}_{\rm exp}$ is a positively invariant set of finite fractal dimension and for every bounded set $D\subset X$ there exists positive constants $t_D$, $C_D$ and $\gamma_D$ such that

\[ h(S(t)D,\mathcal{A}_{\rm exp})\le C_D\cdot e^{-\gamma_{D}(t-t_D)},\ \ \forall t\ge t_D. \]
\end{defn}

\begin{defn}
Let $Y$ be a subset of the phase space $X$ of the dynamical system $(X,S(t))$. Then the {\bf unstable manifold} $\mathcal{M}^u(Y)$ emanating from $Y$ is defined as the set of points $x\in X$ such that there exists a trajectory $\gamma=\{ S(t)x=u(t) : t\in\mathbb{R} \}$ with the properties

\[  \lim_{t\to-\infty}\text{dist}(u(t),Y)=0. \]

\end{defn}

\subsection{Assumptions and results}
\subsubsection{Well-posedness}
First of all, we list assumptions that we shall use for proving the well-posedness:\\

{\bf(H1)}
$M$ and $N$ are $C^1$-functions on $[0,\infty)$ with 
\begin{equation}
M(\tau)\ge0\ \  \text{and} \ \ N(\tau)>0,\ \ \forall \tau\ge0.
\label{eq:3.1}  
\end{equation}

{\bf(H2)}
$f$ : $\mathbb{R} \rightarrow \mathbb{R}$ is a $C^1$-function such that $f(0)=0$, and 
\begin{equation}
|f'(u)|\le \sigma_1(1+|u|^{p/2}),\ \ \forall u\in\mathbb{R}, 
\label{eq:3.2}
\end{equation}

for some constant $\sigma_1>0$, and the power $p$ satisfying
\begin{equation}
p>0\ \  \text{if}\ \  1\le n\le 4\ \ \ \text{and}\ \ \ 0<p\le\frac{8}{n-4}\ \ \text{if}\ \ n\ge5. 
\label{eq:3.3}
\end{equation}
Besides, let us suppose that there exists a constant $l_0\ge0$ such that
\begin{equation}
\tilde{f}(u):=\int^u_0f(s)ds\ge-\frac{\lambda_1}{8}|u|^2-l_0,\ \ \forall u\in \mathbb{R}. 
\label{eq:3.4}
\end{equation}
Then the well-posedness of (\ref{eq:2.1}) is given by the following:

\begin{thm}
Let $T>0$ be an arbitrary number, $h\in L^2(\Omega)$ and $0\le\alpha\le1$. Also, we assume $(H1)$ and $(H2)$. Then, we have:
\renewcommand{\theenumi}{\roman{enumi}}
\begin{enumerate}
\item If the initial data $(u_0,u_1)\in D(A)\times D(A^{1/2})$, then the problem (\ref{eq:2.1}) has a strong solution in the class
\begin{equation}
u\in L^{\infty}(0,T;D(A)),\ \ \ u_{t}\in L^{\infty}(0,T;D(A^{1/2}))\ \ \ u_{tt}\in L^{\infty}(0,T;D(A^{\theta/4})).
\label{eq:3.5}
\end{equation}
\item  If the initial data $(u_0,u_1)\in \mathcal{H}_{\alpha, \theta}$, then the problem (\ref{eq:2.1}) has a weak solution in the class
\begin{equation}
u\in L^{\infty}(0,T;D(A^{1/2})),\ \ \ u_{t}\in L^{\infty}(0,T;H_{\alpha, \theta})\ \ \ %u_{tt}\in L^{\infty}(0,T;D(A^{1/4})).
\label{eq:3.6}
\end{equation}
\item Both weak and strong solutions depend continuously on the initial data in $\mathcal{H}_{\alpha, \theta}$. More precisely, if $z=(u,u_t)$, $\tilde{z}=(\tilde{u},\tilde{u}_t)$ are two solutions corresponding to the initial data $z^1_0=(u_0,u_1)$, $\tilde{z}_0=(\tilde{u}_0,\tilde{u}_1)$ lying in $\mathcal{H}_{\alpha, \theta}$, then
\begin{equation}
\|z(t)-\tilde{z}(t)\|_{\mathcal{H}_{\alpha, \theta}}\le e^{CT}\|z_0-\tilde{z}_0\|_{\mathcal{H}_{\alpha, \theta}},\quad \forall t\in[0,T],
\label{eq:3.8}
\end{equation}
for some positive constant $C=C(\|z_0\|,\|\tilde{z}_0\|)$. In particular, the problem (\ref{eq:2.1}) has uniqueness.

\end{enumerate}
\label{thm:3.1}
\end{thm}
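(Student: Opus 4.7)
The plan is to employ a Faedo–Galerkin scheme based on the eigenbasis $(\omega_j)$ of $A$. Set $V_m=\mathrm{span}\{\omega_1,\ldots,\omega_m\}$ and seek $u_m(t)=\sum_{j=1}^m g_{jm}(t)\omega_j$ solving the projection of (\ref{eq:2.1}) onto $V_m$ with initial data obtained by projecting $(u_0,u_1)$ on $V_m$. Since $M$, $N$, and $f$ are $C^1$, the resulting nonlinear ODE system for $(g_{jm})$ admits local-in-time solutions by standard ODE theory; the uniform a priori bounds derived next then extend them to $[0,T]$.

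The first energy estimate comes from testing the projected equation against $u_{mt}$. Introducing $\widehat{M}(s)=\int_0^s M(\tau)\,d\tau$ and recalling $\widetilde{f}(u)=\int_0^u f(s)\,ds$, one arrives at
\begin{equation*}
\frac{d}{dt}\mathcal{E}_m(t) + N(\|A^{1/4}u_m\|^2_2)\|A^{\theta'/4}u_{mt}\|^2_2=0,
\end{equation*}
where $\mathcal{E}_m=\tfrac12\|u_{mt}\|^2_{H_{\alpha,\theta}}+\tfrac12\|A^{1/2}u_m\|^2_2+\tfrac12\widehat{M}(\|A^{1/4}u_m\|^2_2)+\int_\Omega\widetilde{f}(u_m)\,dx-(h,u_m)$. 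By (H1), $\widehat{M}\ge 0$ and the dissipative term is nonnegative; by (\ref{eq:3.4}) and $\|u\|^2_2\le \lambda_1^{-1}\|A^{1/2}u\|^2_2$ one has $\int_\Omega\widetilde{f}(u_m)\,dx\ge -\tfrac18\|A^{1/2}u_m\|^2_2-l_0|\Omega|$. After absorbing $(h,u_m)$ by Young's inequality, $\mathcal{E}_m$ is equivalent to $\|(u_m,u_{mt})\|^2_{\mathcal{H}_{\alpha,\theta}}$ from above and below up to constants, and a Gronwall argument gives uniform bounds $u_m\in L^\infty(0,T;D(A^{1/2}))$ and $u_{mt}\in L^\infty(0,T;H_{\alpha,\theta})$ independent of $m$.

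To pass to the limit I need strong convergence of $u_m$. Combining the a priori bounds with the evolution equation controls $u_{mtt}$ in a suitable negative-order space, and Aubin–Lions yields $u_m\to u$ strongly in $C([0,T];D(A^s))$ for any $s<1/2$. Together with the growth restriction (\ref{eq:3.3}) and the Sobolev embedding $D(A^{1/2})\hookrightarrow L^{2n/(n-4)}$ (borderline for $n\ge 5$), this gives $f(u_m)\to f(u)$ in the dual of an appropriate Sobolev space, while $\|A^{1/4}u_m\|_2\to\|A^{1/4}u\|_2$ in $C([0,T])$ lets us pass to the limit in the nonlocal coefficients $M(\|A^{1/4}u_m\|^2_2)$ and $N(\|A^{1/4}u_m\|^2_2)$, proving existence in (ii). For the strong solutions in (i), I repeat the scheme testing by $Au_{mt}$, which yields uniform bounds on $\|Au_m\|_2$ and $\|A^{1/2}u_{mt}\|_2$, and via the equation on $\|A^{\theta/4}u_{mtt}\|_2$; the only delicate term is $\int f(u_m)Au_{mt}\,dx$, handled by (\ref{eq:3.2}) and the same Sobolev embeddings.

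For the continuous dependence (iii), let $w=u-\tilde u$ and subtract the two equations. Testing by $w_t$ gives
\begin{equation*}
\tfrac{d}{dt}\tfrac12\|(w,w_t)\|^2_{\mathcal{H}_{\alpha,\theta}} + N(\|A^{1/4}u\|^2_2)\|A^{\theta'/4}w_t\|^2_2 = -\langle \mathcal{R}(u,\tilde u), w_t\rangle,
\end{equation*}
where $\mathcal{R}$ collects the differences of the nonlocal coefficients and the source term. On the bounded trajectories $M$, $N$ are Lipschitz, so the mean value theorem yields $|M(\|A^{1/4}u\|^2_2)-M(\|A^{1/4}\tilde u\|^2_2)|\le C\|A^{1/4}w\|_2$ and analogously for $N$; the difference $f(u)-f(\tilde u)$ is bounded by $\sigma_1(1+|u|^{p/2}+|\tilde u|^{p/2})|w|$ and, via (\ref{eq:3.3}) and Sobolev, by $C\|A^{1/2}w\|_2$ in $L^2$. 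This yields $|\langle\mathcal{R},w_t\rangle|\le C\|(w,w_t)\|^2_{\mathcal{H}_{\alpha,\theta}}$, and Gronwall delivers (\ref{eq:3.8}); uniqueness is the case $z_0=\tilde z_0$. The principal obstacle will be the identification of the Kirchhoff-type nonlocal terms in the Galerkin limit, which forces strong convergence of $\|A^{1/4}u_m\|_2$ uniformly in $t$, together with the careful matching of the growth exponent $p$ with the borderline Sobolev embedding coming from $D(A^{1/2})$, both for the existence step and for the $f$-difference estimate in uniqueness.
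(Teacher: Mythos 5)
Your overall architecture (Galerkin scheme on the eigenbasis of $A$, a first energy identity from testing with $u_{mt}$, a higher-order estimate for strong solutions, and a Gronwall argument on the difference of two solutions for (iii)) matches the paper's, and your first energy estimate and your continuous-dependence computation are essentially identical to the paper's (the paper estimates the terms $J_1,\dots,J_4$ exactly as you indicate, via the mean value theorem, the growth bound derived from (\ref{eq:3.2}), and the generalized H\"older inequality). There are, however, two points where you deviate, one of which is a genuine gap.

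The gap is in your higher-order estimate for part (i). You propose to test the projected equation with $Au_{mt}$. The rotational inertia term then contributes
\begin{equation*}
\alpha\bigl(A^{\theta/2}u_{mtt},Au_{mt}\bigr)=\frac{\alpha}{2}\frac{d}{dt}\bigl\|A^{(2+\theta)/4}u_{mt}\bigr\|_2^2 ,
\end{equation*}
so closing the estimate requires a bound on $\alpha\|A^{(2+\theta)/4}u_{mt}(0)\|_2^2$ that is uniform in $m$. Since $u^m_1$ is the projection of $u_1$ onto $W_m$, this quantity stays bounded as $m\to\infty$ only if $u_1\in D(A^{(2+\theta)/4})$, which is strictly stronger than the hypothesis $u_1\in D(A^{1/2})$ whenever $\alpha>0$ and $\theta>0$. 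Your multiplier therefore does not yield a uniform a priori bound in exactly the regime the theorem is designed for. The paper avoids this by a different second energy estimate: it differentiates the approximate equation in time and tests with $u^m_{tt}$ (see (\ref{eq:3.18})), so the inertia term produces $\frac{\alpha}{2}\frac{d}{dt}\|A^{\theta/4}u^m_{tt}\|_2^2$, whose initial value is controlled by solving the equation at $t=0$ for $(1+\alpha A^{\theta/2})u_{tt}(0)$ with $(u_0,u_1)\in D(A)\times D(A^{1/2})$; the bound $u\in L^\infty(0,T;D(A))$ is then read off from the equation rather than obtained directly from the multiplier. You should replace your $Au_{mt}$ multiplier by this time-differentiation argument (or add the hypothesis $u_1\in D(A^{(2+\theta)/4})$, which would weaken the theorem).

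The second deviation is legitimate but worth flagging. For part (ii) you pass to the limit in the Galerkin scheme with rough data, using Aubin--Lions to identify the Kirchhoff coefficients and $f(u)$; the paper instead obtains weak solutions as limits of strong solutions via density of $D(A)\times D(A^{1/2})$ in $\mathcal{H}_{\alpha,\theta}$ together with the continuous-dependence estimate (\ref{eq:3.20}). Your route is workable (and more self-contained at the level of compactness), but at the critical exponent $p=8/(n-4)$ the identification of $f(u_m)$ needs the usual pointwise-a.e.-plus-boundedness lemma, and, more importantly, your proof of (iii) tests the difference equation with $w_t$, which for merely weak solutions is only formal; the paper justifies (\ref{eq:3.8}) for weak solutions precisely by proving it first for strong solutions and passing to the limit, a step you would still need in your framework.
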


\subsubsection{Stability properties of solutions}
The well-posedness of the problem (\ref{eq:2.1}) provide the family of evolution operators $S_{\alpha, \theta}(t):\mathcal{H}_{\alpha, \theta}\to\mathcal{H}_{\alpha, \theta}$ defined by
\begin{equation}
S_{\alpha, \theta}(t)(u_0,u_1)=(u(t),u_t(t)),\ \ t\ge0,
\label{eq:3.32}
\end{equation}
where $(u,u_t)$ is the unique weak solution of (\ref{eq:2.1}). $S_{\alpha, \theta}$ are nonlinear $C_0$-semi-groups, and is locally Lipschitz continuous on the phase space $\mathcal{H}_{\alpha, \theta}$. Hence, the problem (\ref{eq:2.1}) generates a dynamical system $(\mathcal{H}_{\alpha, \theta},S_{\alpha, \theta}(t))$ and we study the asymptotic behavior of solutions for the problem (\ref{eq:2.1}) through the dynamical system $(\mathcal{H}_{\alpha, \theta},S_{\alpha, \theta}(t))$.\\

Now, we add assumptions for establishing stability properties of solutions:\\

{\bf(H3)} There exists a constant $l_1\ge0$ such that 
\begin{equation}
\tilde{f}(u)\le f(u)u+\frac{\lambda_1}{8}|u|^2+l_1,\ \ \ \forall u\in\mathbb{R}. 
\label{eq:4.1}
\end{equation}

{\bf(H4)} There exists a constant $l_2\ge0$ such that
\begin{equation}
\widetilde{M}(\tau):=\int^{\tau}_0M(s)ds\le2M(\tau)\tau+\frac{\lambda^{1/2}_1}{4}\tau+2l_2,\ \ \ \ \forall\tau\ge0.
\label{eq:4.2}
\end{equation}

\noindent
Then we can state the stability properties of solutions as follows:

\begin{thm}
Let us assume that the hypothesis of Theorem \ref{thm:3.1} holds. Besides, we suppose that $(H3)$ and $(H4)$ hold. Then we have%Besides, we suppose that $(H3)$ and $(H4)$ hold. Then we have
\renewcommand{\theenumi}{\roman{enumi}}
\begin{enumerate}
\item The dynamical system $(\mathcal{H}_{\alpha, \theta},S_{\alpha, \theta}(t))$ generated from the problem (\ref{eq:2.1}) possesses the global attractor $\mathcal{A}_{\alpha,\theta}\subset\mathcal{H}_{\alpha, \theta}$ and it is compact and connected. 
\item The global attractor $\mathcal{A}_{\alpha,\theta}$ is precisely the unstable manifold $\mathcal{A}_{\alpha,\theta}=\mathcal{M}^u(\mathcal{N})$, emanating from the set $\mathcal{N}$ consisting of stationary points of $S_{\alpha,\theta}(t)$, namely,
\[ \mathcal{N}=\bigl\{ (u,0)\in\mathcal{H}_{\alpha,\theta}\ |\ Au+M\bigl( \|A^{1/4}u\|^2_2\bigr)A^{1/2}u+f(u)=h \bigr\}. \]
\item There exists a global minimal attractor $\mathcal{A}_{\mathrm{min}}$ to the dynamical system $(\mathcal{H}_{\alpha, \theta},S_{\alpha, \theta}(t))$, which is precisely the set of the stationary points, that is, $\mathcal{A}_{\min}=\mathcal{N}$.

\item The dynamical system $(\mathcal{H}_{\alpha, \theta},S_{\alpha, \theta}(t))$ possesses an exponential attractor $\mathcal{A}_{\rm exp,\alpha,\theta}$ with a finite dimension in the extended space
\[ \mathcal{H}^{-s}_{\alpha, \theta}:=D(A^{(1-s)/2})\times H^{-s}_{\alpha, \theta},\ \ 0<s\le1, \]
where $H^{-s}_{\alpha, \theta}$ is the Hilbert space $D(A^{1/4-s/2})$ equipped with the norm
\[ \|v\|^2_{H^{-s}_{\alpha, \theta}}:=\|A^{-s/2}v\|^2_2+\alpha\|A^{(\theta/4-s/2)}v\|^2_2. \]
and if $\alpha=0$, $H^{-s}_{0,\theta}$ is the Hilbert space $D(A^{-s/2})$.

\end{enumerate}
\label{thm:4.1}
\end{thm}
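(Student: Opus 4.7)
The plan is to establish the four conclusions by the now-standard ``gradient system plus quasi-stability'' framework, adapted in the spirit of Jorge Silva--Narciso to handle the fractional rotational term $\alpha A^{\theta/2}$ uniformly. First I would introduce the natural energy
\[
E_{\alpha,\theta}(t)=\tfrac12\|u_t\|_{H_{\alpha,\theta}}^2+\tfrac12\|A^{1/2}u\|_2^2+\tfrac12\widetilde{M}(\|A^{1/4}u\|_2^2)+(\tilde f(u),1)-(h,u),
\]
and verify via $(H1)$--$(H4)$ that $E_{\alpha,\theta}$ is bounded below by a coercive functional on $\mathcal{H}_{\alpha,\theta}$. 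Differentiating along solutions gives $\tfrac{d}{dt}E_{\alpha,\theta}=-N(\|A^{1/4}u\|_2^2)\,\|A^{\theta'/4}u_t\|_2^2\le 0$, so $E_{\alpha,\theta}$ is a strict Lyapunov function whose set of stationary points is precisely $\mathcal{N}$. Perturbing $E_{\alpha,\theta}$ by a small multiple of $\varepsilon\bigl((1+\alpha A^{\theta/2})u_t,u\bigr)$, exactly as in the reference but with the inertia operator included in the pairing, yields a differential inequality of the form $\tfrac{d}{dt}\Psi_{\alpha,\theta}+c\,\Psi_{\alpha,\theta}\le K$ with $c,K$ independent of $\alpha\in[0,1]$. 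This produces a bounded absorbing set $\mathcal{B}\subset\mathcal{H}_{\alpha,\theta}$ whose radius is estimated uniformly in $\alpha$; this uniformity is exactly what the introduction announces as essential for the later continuity arguments.

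Next I would prove a stability (quasi-stability) inequality for the difference $w=u-\tilde u$ of two solutions starting in $\mathcal{B}$: there exist constants $\kappa>0$ and $C_\mathcal{B}>0$ such that
\[
\|S_{\alpha,\theta}(t)z_0-S_{\alpha,\theta}(t)\tilde z_0\|_{\mathcal{H}_{\alpha,\theta}}^2
\le C_\mathcal{B} e^{-\kappa t}\|z_0-\tilde z_0\|_{\mathcal{H}_{\alpha,\theta}}^2
+C_\mathcal{B}\sup_{0\le s\le t}\bigl\|w(s)\bigr\|_{D(A^{1/2-\delta})}^2,
\]
for some $\delta>0$. This is obtained by writing the equation for $w$, testing with $w_t+\eta w$ for a carefully chosen $\eta$, using $(H2)$ and the Mean Value Theorem on $f$, $M$, $N$ together with Sobolev embeddings to control the nonlinear differences by the compact lower-order seminorm, and absorbing the inertia contribution $\alpha\|A^{\theta/4}w_t\|_2^2$ into the $H_{\alpha,\theta}$-norm. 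Together with the absorbing set this estimate yields asymptotic smoothness in the sense of Chueshov--Lasiecka, hence part (i): a compact connected global attractor $\mathcal{A}_{\alpha,\theta}$.

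With the global attractor in hand, parts (ii) and (iii) follow from general gradient-system theory: since $E_{\alpha,\theta}$ is a strict Lyapunov function and $\mathcal{N}$ is bounded (which I would check using $(H1)$--$(H4)$ and elliptic regularity for the stationary equation), Ladyzhenskaya-type theorems give $\mathcal{A}_{\alpha,\theta}=\mathcal{M}^u(\mathcal{N})$ and, because every full bounded trajectory converges to $\mathcal{N}$, the minimal attractor must coincide with $\mathcal{N}$.

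For part (iv) I would apply the abstract construction of exponential attractors for quasi-stable systems (Efendiev--Miranville--Zelik / Chueshov--Lasiecka). The quasi-stability inequality above, combined with Hölder continuity of $t\mapsto S_{\alpha,\theta}(t)z$ in the weaker space $\mathcal{H}_{\alpha,\theta}^{-s}$ (obtained directly from the equation by bounding $u_{tt}$ in a negative-order norm using that $(1+\alpha A^{\theta/2})^{-1}$ is bounded on the appropriate scale), produces a compact positively invariant set of finite fractal dimension attracting $\mathcal{B}$ exponentially; extending the attraction from $\mathcal{B}$ to arbitrary bounded sets is standard because $\mathcal{B}$ is absorbing. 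I expect the main obstacle to be the stability inequality: specifically, controlling the rotational-inertia contribution uniformly in $\alpha\in[0,1]$ while still recovering a genuinely compact lower-order remainder, since the natural energy identity for $w$ produces a term $\alpha\|A^{\theta/4}w_t\|_2^2$ that must be dissipated by $N(\cdot)A^{\theta'/2}w_t$ via the condition $\theta\le\theta'$; the choice of multiplier and the $\varepsilon$-perturbation must be calibrated so that no constant blows up as $\alpha\downarrow 0$.
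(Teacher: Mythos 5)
Your proposal follows essentially the same route as the paper: the perturbed energy $E_{\alpha,\theta}+\epsilon\bigl((1+\alpha A^{\theta/2})u_t,u\bigr)$ for a uniformly (in $\alpha$) bounded absorbing set, a quasi-stability inequality with a compact lower-order remainder for asymptotic compactness, the gradient structure of $E_{\alpha,\theta}$ for the characterizations $\mathcal{A}_{\alpha,\theta}=\mathcal{M}^u(\mathcal{N})$ and $\mathcal{A}_{\min}=\mathcal{N}$, and quasi-stability plus H\"older continuity of $t\mapsto S_{\alpha,\theta}(t)z$ in $\mathcal{H}^{-s}_{\alpha,\theta}$ for the exponential attractor. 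The only cosmetic differences are that the paper runs the compactness step as an explicit Kuratowski $\kappa$-contraction with the pseudometric $\sup_{[0,T]}\|u^1-u^2\|_2^{\Theta}$ (obtained by interpolating $\|A^{1/4}w\|_2^2+\|w\|_{p+2}^2$) rather than invoking abstract asymptotic smoothness, and constructs the discrete exponential attractor by hand; your identification of the delicate point (uniformity in $\alpha$ via $\theta\le\theta'$) matches the paper's treatment.
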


\subsubsection{Main Theorem}
%We are now in condition to state the main Theorem as below:

\begin{thm}
Let the assumptions of Theorem \ref{thm:3.1} and \ref{thm:4.1} be in force. Then we have%Then there exist exponential attractors $\mathcal{A}_{\exp,\alpha}$ for $(S_{\alpha}(t),\mathcal{H}_{\alpha})$, respectively for which the estimate
\renewcommand{\theenumi}{\roman{enumi}}
\begin{enumerate}
\item The family of attractors $\mathcal{A}_{\alpha,\theta}$ is upper semi-continuous at the point $0$, that is,
\[ h(\mathcal{A}_{\alpha,\theta},\mathcal{A})\equiv \sup_{y\in\mathcal{A}_{\alpha,\theta}}\inf_{z\in\mathcal{A}} \|y-z\|_{\mathcal{H}} \rightarrow 0 \]
as $\alpha \rightarrow 0+$.
\item There exist exponential attractors $\mathcal{A}_{\exp,\alpha,\theta}$ for $(\mathcal{H}_{\alpha, \theta},S_{\alpha, \theta}(t))$, for which the estimate
\begin{equation*}
H(\mathcal{A}_{\exp,\alpha,\theta},\mathcal{A}_{\exp,0}) \equiv \max \{ h(\mathcal{A}_{\exp,\alpha,\theta},\mathcal{A}_{\exp,0}),h(\mathcal{A}_{\exp,0},\mathcal{A}_{\exp,\alpha,\theta}) \} \le C\alpha^{\rho}
\end{equation*}
holds with some exponent $0<\rho<1$ and constant $C>0$.
\end{enumerate}
\label{thm:4.2}
\end{thm}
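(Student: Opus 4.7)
The plan is to leverage two preparatory facts announced in the introduction: the radius of the absorbing set $\mathcal{B}\subset\mathcal{H}$ from Theorem~\ref{thm:4.1} is uniform in $\alpha\in[0,1]$, and complete bounded trajectories lying in the attractors enjoy higher regularity than the ambient phase space suggests. In particular $\bigcup_{\alpha\in[0,1]}\mathcal{A}_{\alpha,\theta}$ is bounded in a space compactly embedded in $\mathcal{H}$. With these in hand, (i) is obtained by sequential compactness together with convergence of $S_{\alpha,\theta}$ to $S_{0,\theta}$ as $\alpha\to 0^+$, while (ii) requires a quantitative H\"older rate of that convergence combined with the $\alpha$-uniform construction of exponential attractors already produced by Theorem~\ref{thm:4.1}.

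For part (i), I would take sequences $\alpha_n\to 0^+$ and $y_n\in\mathcal{A}_{\alpha_n,\theta}$ and, by invariance, pick complete bounded trajectories $u_n(\cdot)$ through $y_n$ (in the sense that $(u_n(t),u_{n,t}(t))\in\mathcal{A}_{\alpha_n,\theta}$ for every $t\in\mathbb{R}$ and $(u_n(0),u_{n,t}(0))=y_n$). The uniform regularity gives $n$-independent bounds in a stronger space, so by the Aubin-Lions lemma a diagonal subsequence converges in $C_{\mathrm{loc}}(\mathbb{R};\mathcal{H})$ to a limit $u$. The rotational term $\alpha_n A^{\theta/2}u_{n,tt}$ vanishes distributionally as $\alpha_n\to 0$ because the extra regularity provides a uniform bound on $u_{n,tt}$ in a suitable negative-order space. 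Passing to the limit in the weak formulation of (\ref{eq:2.1}) shows that $u$ is a complete bounded trajectory of $S_{0,\theta}$, hence $\lim y_n\in\mathcal{A}$.

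For part (ii), I would first establish a quantitative closeness estimate for the semiflows. Writing $u^\alpha$ and $u^0$ for the $u$-components of $S_{\alpha,\theta}(t)z_0$ and $S_{0,\theta}(t)z_0$ with $z_0\in\mathcal{B}$, the difference $w=u^\alpha-u^0$ solves
\begin{equation*}
(1+\alpha A^{\theta/2})w_{tt}+Aw+R(u^\alpha,u^0)=-\alpha A^{\theta/2}u^0_{tt},
\end{equation*}
where $R$ is a nonlinear remainder controllable by the Lipschitz estimates underlying (\ref{eq:3.8}). Testing with $w_t$ and exploiting the extra regularity of $u^0_{tt}$ supplied by the analysis on the limit attractor, Gronwall's inequality yields
\begin{equation*}
\|S_{\alpha,\theta}(t)z_0-S_{0,\theta}(t)z_0\|_{\mathcal{H}}\le C(T)\,\alpha^{1/2},\qquad t\in[0,T],\ z_0\in\mathcal{B}.
\end{equation*}
Combined with the $\alpha$-uniform smoothing-plus-contraction decomposition that underlies the construction of $\mathcal{A}_{\exp,\alpha,\theta}$, the standard abstract continuity theorem for exponential attractors then produces a family with symmetric Hausdorff distance $H(\mathcal{A}_{\exp,\alpha,\theta},\mathcal{A}_{\exp,0})\le C\alpha^\rho$ for some $\rho\in(0,1)$.

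The main obstacle in both parts is the enhanced regularity of trajectories on the limit attractor: when $\alpha=0$ the rotational inertia no longer smooths the time derivative, so control of $u^0_{tt}$ in a space where $A^{\theta/2}$ acts as a bounded perturbation must be extracted from the damping $N(\|A^{1/4}u\|_2^2)A^{\theta'/2}u_t$ and the unstable-manifold description $\mathcal{A}=\mathcal{M}^u(\mathcal{N})$; specifically, one bootstraps spatial regularity from the elliptic problem satisfied by stationary points and propagates it forward along the unstable manifold via a time-difference-quotient argument. Once this regularity is quantified uniformly on $\mathcal{B}$, the Gronwall constant depends only on the radius of $\mathcal{B}$ and both conclusions of Theorem~\ref{thm:4.2} follow.
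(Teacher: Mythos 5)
Your overall architecture matches the paper's: uniform-in-$\alpha$ higher regularity of attractor trajectories (obtained by a time-difference-quotient argument fed into the stability inequality) gives a common compact set; a Gronwall estimate on the difference of the semiflows gives convergence as $\alpha\to0^+$; part (i) follows by sequential compactness (the paper packages this as the abstract criterion of Proposition \ref{prop:upp} rather than passing to the limit in the equation directly, but the ingredients are identical), and part (ii) follows from the Efendiev--Yagi-type perturbation theorem for exponential attractors, which is exactly what the paper invokes.

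The one step that would fail as written is your claim that $\|S_{\alpha,\theta}(t)z_0-S_{0,\theta}(t)z_0\|_{\mathcal{H}}\le C(T)\alpha^{1/2}$ holds uniformly for $z_0\in\mathcal{B}$, together with the concluding remark that the enhanced regularity can be ``quantified uniformly on $\mathcal{B}$''. The absorbing set $\mathcal{B}$ is a ball in $\mathcal{H}$ only; a generic $z_0\in\mathcal{B}$ has no extra regularity at $t=0$, and the Gronwall argument for $w=u^{\alpha}-u^{0}$ produces the term $\alpha\int_0^t\|u^{\alpha}_{tt}(s)\|_2^2\,ds$ (from testing the rotational term against $w_t$), which is controlled only via the second energy estimate, i.e.\ only for initial data in a bounded set of $D(A)\times D(A^{1/2})$. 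The paper's Lemma \ref{prop:5.1} is accordingly stated for $B\subset D(A)\times D(A^{1/2})$ (with rate $C\alpha$, not $\alpha^{1/2}$), and the exponential-attractor construction is repaired by the density of $\widetilde{\mathcal{H}}=D(A)\times D(A^{1/2})$ in $\mathcal{H}_{\alpha,\theta}$: the centers of the covering nets are chosen in $\widetilde{\mathcal{H}}$, so the quantitative semiflow comparison is only ever applied to regular data. Your proposal needs this restriction (or the observation that the attractors themselves lie in a bounded set of $D(A)\times D(A^{1/2})$, which is Lemma \ref{thm:5.2} and suffices for part (i)) to close the argument; with it, the rest goes through essentially as you describe.
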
 

\noindent
In the next sections we begin with the proofs of these statements.

\section{Well-posedness for the nonlocal extensible beam equation}

Let $(\omega_m)$ be the basis in $L^2(\Omega)$, $W_{m}$ the space generated by $\omega_1,...,\omega_{m}$, and set
\[ u^{m}(t)=\sum^{m}_{j=1}y_{jm}(t)\omega_j. \]
%We first consider an approximate problem of (\ref{eq:2.1}) 
For $(u_0,u_1)\in D(A)\times D(A^{1/2})$, we consider the following problem:

\begin{eqnarray}
\begin{split}
&(u^m_{tt},\omega)+\alpha(u^m_{tt},A^{\theta/2}\omega)+(A^{1/2}u^m,A^{1/2}\omega)+M\Bigl(\|A^{1/4}u^m\|^2_2\Bigr)(A^{1/2}u^m,\omega)\\
&+N\Bigl(\|A^{1/4}u^m\|^2_2\|\Bigr)(A^{\theta'/2}u^m_t,\omega)+(f(u^m),\omega)=(h,\omega),\ \ \ \forall \omega\in W_m, \\
&u^m(0)=u^m_0 \rightarrow u_0,\ \text{in } D(A) \text{ and } u^m_t(0)=u^m_1 \rightarrow u_1\ \text{in }D(A^{1/2}).
\label{eq:3.9}
\end{split}
\end{eqnarray}
%We observe that in view of the assumptions and taking the immersion into account, the nonlinear terms in are well-defined, that is, they belong to $L^2(\Omega)$.
By standard methods in ordinal differential equation, we can prove the existence of $C^2$-class solutions to the approximate problem on some interval $[0,T_m)$ and this solution can be extended to the closed interval $[0,T]$ by using the first energy estimate (\ref{eq:3.17}) below.

\subsection{A priori estimates}
{\bf The First Energy:} Taking $\omega=u^m_t$ in (\ref{eq:3.9}), we infer
\begin{equation}
\frac{d}{dt}E^m_{\alpha, \theta}(t)+N\Bigl( \|A^{1/4}u^m(t)\|^2_2\Bigr)\|A^{\theta'/4}u^m_t(t)\|^2_2=0,
\label{eq:3.10}
\end{equation}
where we set $E^m_{\alpha, \theta}(t)=E_{\alpha, \theta}(u^m(t),u^m_t(t))$. Here the energy $E_{\alpha, \theta}(t)=E_{\alpha, \theta}(u(t),u_t(t))$ ($(u,u_t)\in\mathcal{H}_{\alpha, \theta}$) is defined by
\begin{equation}
E_{\alpha, \theta}(t):=\frac{1}{2}\bigl(\|A^{1/2}u(t)\|^2_2+\|u_t(t)\|^2_2+\alpha\|A^{\theta/4}u_t(t)\|^2_2+\widetilde{M}(\|A^{1/4}u(t)\|^2_2)\bigr)+\int_{\Omega}\bigl(\tilde{f}(u(t))-hu(t)\bigr)dx,
\label{eq:3.11}
\end{equation}
where the definition of $\widetilde{M}$ is given by (\ref{eq:4.2}). Integrating from 0 to $t\ (\le T)$ we get
\begin{equation}
E^m_{\alpha, \theta}(t)+\int^t_0N\Bigl(\|A^{1/4}u^m(s)\|^2_2\Bigr)\|A^{\theta'/4}u^m_t(s)\|^2_2ds=E^m_{\alpha, \theta}(0).
\label{eq:3.12}
\end{equation}
Now, using Young's inequality with $\varepsilon=\frac{\lambda_1}{8}$ and the condition (\ref{eq:3.4}) we have
\begin{eqnarray}
\int_{\Omega}\bigl(\tilde{f}(u^m(t))-hu^m(t)\bigr)dx&\ge& -\frac{\lambda_1}{8}\|u^m(t)\|^2_2-l_0|\Omega|-\frac{1}{2}\Bigl(\frac{\lambda_1}{4}\|u^m(t)\|^2_2+\frac{4}{\lambda_1}\|h\|^2_2\Bigr)\nonumber\\
&\ge&-\frac{1}{8}\|A^{1/2}u^m(t)\|^2_2-l_0|\Omega|-\frac{1}{8}\|A^{1/2}u^m(t)\|^2_2-\frac{2}{\lambda_1}\|h\|^2_2\nonumber\\
&=&-\frac{1}{4}\|A^{1/2}u^m(t)\|^2_2-\frac{2}{\lambda_1}\|h\|^2_2-l_0|\Omega| \label{eq:3.13}
\end{eqnarray}
hence 
\begin{equation}
\frac{1}{4}\bigl(\|A^{1/2}u^m(t)\|^2_2+\|u^m_t(t)\|^2_2+\alpha\|A^{\theta/4}u^m_t(t)\|^2_2\bigr)\le E^m_{\alpha, \theta}(t)+\frac{2}{\lambda_1} \|h\|^2_2+l_0 |\Omega|.
\label{eq:3.14}
\end{equation}
Compounding (\ref{eq:3.12}) with (\ref{eq:3.14}), we have
\begin{equation}
\|A^{1/2}u^m(t)\|^2_2+\|u^m_t(t)\|^2_2+\alpha\|A^{\theta/4}u^m_t(t)\|^2_2\le C_1\ \ \ \forall t\in[0,T],\ m\in\mathbb{N},
\label{eq:3.15}
\end{equation}
where $C_1=C_1(\|A^{1/2}u_0\|_2,\|A^{\theta/4}u_1\|_2,\|h\|_2,|\Omega|,l_0)>0$. Since $N(\tau)>0$ and above the  estimate, there exists a positive constant $N_0=N_0(\|(u_0,u_1)\|_{\mathcal{H}_{\alpha, \theta}})>0$ such that $N(\|A^{1/4}u(t)\|^2_2)\ge N_0$ for any $t\in[0,T]$. Going back to (\ref{eq:3.12}) and combining this uniform boundedness with the easy relation $E^m_{\alpha, \theta}(t)\le E^m_{1, \theta}(t)\ (t\ge0)$, we derive
\begin{equation}
E^m_{\alpha, \theta}(t)+N_0\int^t_0\|A^{\theta'/4}u^m_t(s)\|^2_2ds\le E^m_{\alpha, \theta}(0)\le E^m_{1, \theta}(0).
\label{eq:3.16}
\end{equation}
Combining (\ref{eq:3.14}) with (\ref{eq:3.16}) we conclude
\begin{equation}
\|A^{1/2}u^m(t)\|^2_2+\|u^m_t(t)\|^2_2+\alpha\|A^{\theta/4}u^m_t(t)\|^2_2+\int^t_0\|A^{\theta'/4}u^m_t(s)\|^2_2ds\le C_1
\label{eq:3.17}
\end{equation}
for any $t\in [0,T]$ and $m\in\mathbb{N}$, and some constant $C_1>0$ depending on the norm of the initial data in $\mathcal{H}_{\alpha,\theta}$.\\

\noindent{\bf The Second Energy:} Differentiating (\ref{eq:3.9}) with respect to $t$ and substituting $\omega=u^m_{tt}$, it holds that
\begin{eqnarray}
\begin{split}
&\frac{1}{2}\frac{d}{dt}\bigl\{ \|A^{1/2}u^m_t(t)\|^2_2+\|u^m_{tt}(t)\|^2_2+\alpha\|A^{\theta/4}u^m_{tt}(t)\|^2_2+M\bigl(\|A^{1/4}u^m(t)\|^2_2\bigr)\|A^{1/4}u^m_t(t)\|^2_2 \bigr\}+I_1\\
&\ \\
=\ \ &I_2+I_3+I_4+I_5 \label{eq:3.18}
\end{split}
 \end{eqnarray} 
where
\begin{eqnarray*}
I_1&=&N\bigl(\|A^{1/4}u^m\|^2_2\bigr)\|A^{\theta'/4}u^m_{tt}\|^2_2,\\[1.0mm]
I_2&=&-\Bigl(\frac{d}{dt}N\bigl(\|A^{1/4}u^m(t)\|^2_2\bigr)\Bigr)(A^{\theta'/2}u^m_t(t),u^m_{tt}(t)),\\[1.0mm]
I_3&=&-\Bigl(\frac{d}{dt}M\bigl(\|A^{1/4}u^m(t)\|^2_2\bigr)\Bigr)(A^{1/2}u^m(t),u^m_{tt}(t)),\\[1.0mm]
I_4&=&-\int_{\Omega}f'(u^m(t))u^m_t(t)u^m_{tt}(t)dx,\\[1.0mm]
I_5&=&\Bigl(\frac{d}{dt}M\bigl(\|A^{1/4}u^m(t)\|^2_2\bigr)\Bigr)\|A^{1/4}u^m_t(t)\|^2_2.
\end{eqnarray*}

We shall estimate $I_2,\ ...,\ I_5$. First of all, since $M$ and $N$ are $C^1$-functions we have from (\ref{eq:3.17}) that
\begin{equation*}
\max_{\tau\in[0,C_1]}\{ |M'(\tau)|,|N(\tau)|,|N'(\tau)|\} =:C(\|(u_0,u_1)\|_{\mathcal{H}_{\alpha, \theta}})<\infty,
\end{equation*}
where $C_1$ is the constant appeared in (\ref{eq:3.17}). In the following $C>0$ denotes various constants which depends on the initial data in $\mathcal{H}_{\alpha,\theta}$, but not on $T>0$. Using Young's inequality and the self-adjointness of operator $A^{\theta}\ (0\le\theta\le1)$, we have
\begin{eqnarray*}
%\begin{split}
|I_2|&=& \Bigl|\bigl\{ N'(\|A^{1/4}u^m(t)\|^2_2\|)\ 2(A^{1/4}u^m(t),A^{1/4}u^m_t(t))\bigr\} (A^{\theta'/2}u^m_t(t),u^m_{tt}(t))\Bigr|\\
    &\le&C\Bigl( \|A^{\theta'/2}u^m_t(t)\|^2_2+\|u^m_{tt}(t)\|^2_2\Bigr)
%\end{split}
\end{eqnarray*}
and
\begin{eqnarray*}
|I_3|&=& \Bigl|\bigl\{ M'(\|A^{1/4}u^m(t)\|^2_2\|)\ 2(A^{1/4}u^m(t),A^{1/4}u^m_t(t))\bigr\} (A^{1/2}u^m(t),u^m_{tt}(t))\Bigr|\\
     &\le& C\|A^{1/4}u^m(t)\|_2 \|A^{1/4}u^m_t(t)\|_2 \|A^{1/2}u^m(t)\|_2 \|u^m_{tt}(t)\|_2\\
     &\le& C\Bigl(\|A^{1/2}u^m_t(t)\|^2_2+\|u^m_{tt}(t)\|^2_2\Bigr).
\end{eqnarray*}
Further, utilizing the condition (\ref{eq:3.2}), generalized H\"{o}lder inequality with $\frac{p}{2(p+2)}+\frac{1}{p+2}+\frac{1}{2}=1$, Young's inequality, the estimate (\ref{eq:3.17}) and the embedding $V=H^2(\Omega)\cap H^1_0(\Omega) \hookrightarrow L^{p+2}(\Omega)$, we infer
\begin{eqnarray*}
|I_4| &\le&\sigma_1\|(1+|u^m(t)|^{p/2})u^m_t(t)u^m_{tt}(t)\|_1\\
      &\le&\sigma_1\Bigl(|\Omega|^{\frac{p}{2(p+2)}}+\|u^m(t)\|^{p/2}_{p+2}\Bigr)\|u^m_t(t)\|_{p+2}\|u^m_{tt}(t)\|_2\\
      &\le&C\|A^{1/2}u^m_t(t)\|_2\|u^m_{tt}(t)\|_2\\
      &\le&C\Bigl(\|A^{1/2}u^m_t(t)\|^2_2+\|u^m_{tt}(t)\|^2_2\Bigr).
\end{eqnarray*}
It is easy to see that
\begin{equation*}
|I_5|\le C\|A^{1/2}u^m_t(t)\|^2_2
\end{equation*}
Since $N(\tau)>0$, the estimate (\ref{eq:3.17}) implies that $N(\|A^{1/4}u^m(t)\|)\ge N_0>0$ for all $t\in[0,T]$, where $N_0=N_0(\|(u_0,u_1)\|_{\mathcal{H}_{\alpha, \theta}})$ is a constant, so that we can derive $I_1\ge N_0\|A^{\theta'/4}u^m_{tt}(t)\|^2_2$.

Using these five estimates in (\ref{eq:3.18}), there exists a constant $C>0$ such that
\begin{eqnarray}
\begin{split}
&\frac{1}{2}\frac{d}{dt}\bigl\{ \|A^{1/2}u^m_t(t)\|^2_2+\|u^m_{tt}(t)\|^2_2+\alpha\|A^{\theta/4}u^m_{tt}(t)\|^2_2+M\bigl(\|A^{1/4}u^m(t)\|^2_2\bigr)\|A^{1/4}u^m_t(t)\|^2_2 \bigr\}\\[1.0mm]
\le\ &C\Bigl(\|A^{1/2}u^m_t(t)\|^2_2+\|u^m_{tt}(t)\|^2_2+\alpha\|A^{\theta/4}u^m_{tt}(t)\|^2_2+M\bigl(\|A^{1/4}u^m(t)\|^2_2\bigr)\|A^{1/4}u^m_t(t)\|^2_2\Bigr).
\label{eq:3.19}
\end{split}
 \end{eqnarray} 

The estimates (\ref{eq:3.17}), (\ref{eq:3.19}) are sufficient to pass the limit in the approximate equation (\ref{eq:3.9}) to obtain a strong solution satisfying (\ref{eq:3.9}) and
\[ (1+\alpha A^{\theta/2})u_{tt}+Au+M\Bigl(\|A^{1/4}u\|^2_2\Bigr)A^{1/2}u+N\Bigl(\|A^{1/4}u\|^2_2\Bigr)A^{\theta'/2}u_t+f(u)=h\ \text{in}\ L^{\infty}(0,T;L^2(\Omega)).  \]

\subsection{Weak solutions}

Let $(u_0,u_1)\in\mathcal{H}_{\alpha, \theta}$. Then, since $D(A)\times D(A^{1/2})$ is dense in $\mathcal{H}_{\alpha, \theta}$, there exists $(u^k_0,u^k_1)\in D(A)\times D(A^{1/2})$ such that
\[ (u^k_0,u^k_1) \to (u_0,u_1)\ \ \text{in}\ \ \mathcal{H}_{\alpha, \theta}. \]
For each regular initial data $(u^k_0,u^k_1)$ there exists a strong solution $u^k(t)$ satisfying the estimate (\ref{eq:3.17}). Furthermore, the difference of strong solutions $w(t):=u^k(t)-u^l(t)$ satisfies the estimate
\begin{equation}
\|A^{1/2}w(t)\|^2_2+\|w_t(t)\|^2_2+\alpha\|A^{\theta/4}w_t(t)\|^2_2\le C\Bigl(\|A^{1/2}w(0)\|^2_2+\|w_t(0)\|^2_2+\alpha\|A^{\theta/4}w_t(0)\|^2_2\Bigr)\label{eq:3.20}
\end{equation}
for all $t\in[0,T]$ and some positive constant $C=C(\|(u_0,u_1)\|_{\mathcal{H}_{\alpha, \theta}},T)$. This implies that 
\begin{equation}
(u^k,u^k_t) \to (u(t),u_t(t))\ \ \ \text{in}\ \ \ C([0,T],\mathcal{H}_{\alpha, \theta}).
\label{eq:3.21}
\end{equation}
We omit the details of estimate (\ref{eq:3.20}) here, because they are identical to that concerning the continuous dependence presented in the below.  These estimates are enough to conclude  that the limit of approximate solutions satisfy (\ref{eq:3.6}) and the following weak formulation:
\begin{eqnarray}
\begin{split}
(u_{tt},\omega)+\alpha(u_{tt},A^{\theta/2}\omega)+(A^{1/2}u,A^{1/2}\omega)+M\Bigl(\|A^{1/4}u\|^2_2\Bigr)(A^{1/2}u,\omega)\qquad\qquad\qquad\qquad\qquad\qquad&\\
+N\Bigl(\|A^{1/4}u\|^2_2\|\Bigr)(u_t,A^{\theta'/2}\omega)
+(f(u),\omega)=(h,\omega),\ \ \ \forall \omega\in V=H^2(\Omega)\cap H^1_0(\Omega).\quad \quad
\label{eq:3.22}
\end{split}
\end{eqnarray}

\subsection{Uniqueness of strong and weak solutions}

Let $z=(u,u_t)$ and $\tilde{z}=(\tilde{u},\tilde{u}_t)$ be two (strong or weak) solutions corresponding to the initial data $z_0=(u_0,u_1)$ and $\tilde{z_0}=(\tilde{u}_0,\tilde{u}_1)$, respectively. Putting $w=u-\tilde{u}$, we see that the function $(w,w_t)=z-\tilde{z}$ verifies 
\begin{eqnarray}
\begin{split}
(w_{tt}(t),\omega)+(A^{1/2}w(t),A^{1/2}\omega)+\alpha(A^{\theta/4}w_{tt}(t),A^{\theta/4}\omega)+N\bigl(\|A^{1/4}u(t)\|^2_2\bigr)(A^{\theta'/4}w_t(t),A^{\theta'/4}\omega)\qquad&\\[1.0mm]
=-M\bigl(\|A^{1/4}u(t)\|^2_2\bigr)(A^{1/2}w(t),\omega)-\bigl\{ N\bigl(\|A^{1/4}u(t)\|^2_2\bigr)-N\bigl(\|A^{1/4}\tilde{u}(t)\|^2_2\bigr)\bigr\}(A^{\theta'/2}\tilde{u}_t(t),\omega)\qquad\quad&\\[1.0mm]
-\bigl\{ M\bigl(\|A^{1/4}u(t)\|^2_2\bigr)-M\bigl(\|A^{1/4}\tilde{u}(t)\|^2_2\bigr)\bigr\}(A^{1/2}\tilde{u}(t),\omega)-(f(u(t)-f(\tilde{u}(t)),\omega),\qquad\qquad\qquad&
\label{eq:3.23}
\end{split}
\end{eqnarray}
with the initial data $(w(0),w_t(0))=z_0-\tilde{z_0}$, in the strong or weak sense.

We first deal with strong solutions. Substituting $\omega=w_t(t)$ in (\ref{eq:3.23}), we have
\begin{eqnarray}
\frac{1}{2}\frac{d}{dt}\Bigl\{ \|A^{1/2}w(t)\|^2_2+\|w_t(t)\|^2_2+\alpha\|A^{\theta/4}w_t(t)\|^2_2\Bigr\}+N\bigl(\|A^{1/4}u(t)\|^2_2\bigr)\|A^{\theta'/4}w_t(t)\|^2_2\ =\ \sum^4_{j=1}J_j,
\label{eq:3.24}
\end{eqnarray}
where 
\begin{eqnarray}
J_1&=&-M\bigl(\|A^{1/4}u(t)\|^2_2\bigr)(A^{1/2}w(t),w_t(t))\label{eq:3.25},\\[1.0mm]
J_2&=&-\bigl\{ N\bigl(\|A^{1/4}u(t)\|^2_2\bigr)-N\bigl(\|A^{1/4}\tilde{u}(t)\|^2_2\bigr)\bigr\}(A^{\theta'/4}\tilde{u}_t(t),A^{\theta'/4}w_t(t)),\\[1.0mm]
J_3&=&-\bigl\{ M\bigl(\|A^{1/4}u(t)\|^2_2\bigr)-M\bigl(\|A^{1/4}\tilde{u}(t)\|^2_2\bigr)\bigr\}(A^{1/2}\tilde{u}(t),w_t(t)),\\[1.0mm]
J_4&=&-(f(u(t)-f(\tilde{u}(t)),w_t(t)).\label{eq:3.28}
\end{eqnarray}
Since $N(\tau)>0$, the estimate (\ref{eq:3.17}) implies that $N(\|A^{1/4}u(t)\|)\ge N_0>0$ for all $t\in[0,T]$, where $N_0=N_0(\|z_0\|_{\mathcal{H}_{\alpha, \theta}})$. Then (\ref{eq:3.24}) leads to
\begin{eqnarray}
\frac{1}{2}\frac{d}{dt}\Bigl\{ \|A^{1/2}w(t)\|^2_2+\|w_t(t)\|^2_2+\alpha\|A^{\theta/4}w_t(t)\|^2_2\Bigr\}+N_0\|A^{\theta'/4}w_t(t)\|^2_2\ \le\ \sum^4_{j=1}J_j.
\label{eq:3.29}
\end{eqnarray}
Next we estimate $J_1,\  J_2,\ J_3$, and $J_4$. Analogously to the estimate of $I_2$ we have
\begin{eqnarray*}
|J_1|&\le&C\bigl(\|A^{1/2}w(t)\|^2_2+\|w_t(t)\|^2_2\bigr)
\end{eqnarray*}
In the next two estimates we shall use the mean value theorem, Young's inequality and the estimate (\ref{eq:3.17}). Then we get
\begin{eqnarray*}
|J_2|&\le&C\bigl|\|A^{1/4}u(t)\|^2_2-\|A^{1/4}\tilde{u}(t)\|^2_2\bigr|\|A^{\theta'/4}\tilde{u}_t(t)\|_2\|A^{\theta'/4}w_t(t)\|_2\\[1.0mm]
\ \ \ \  &\le&C\big[\|A^{1/4}u(t)\|_2+\|A^{1/4}\tilde{u}(t)\|_2\bigr]\|A^{1/4}w(t)\|_2\|A^{\theta'/4}\tilde{u}_t(t)\|_2\|A^{\theta'/4}w_t(t)\|_2\\[1.0mm]
\ \ \ \  &\le&C\|A^{1/4}w(t)\|_2\|A^{\theta'/4}\tilde{u}_t(t)\|_2\|A^{\theta'/4}w_t(t)\|_2\\
\ \ \ \  &\le&\epsilon\|A^{\theta'/4}w_t(t)\|^2_2+\frac{C^2}{4\epsilon}\|A^{\theta'/4}\tilde{u}_t(t)\|^2_2\|A^{1/2}w(t)\|^2_2
\end{eqnarray*} 
for any $\epsilon>0$ and
\begin{eqnarray*}
|J_3|&\le&C\bigl|\|A^{1/4}u(t)\|^2_2-\|A^{1/4}\tilde{u}(t)\|^2_2\bigr|\|A^{1/2}\tilde{u}(t)\|_2\|w_t(t)\|_2\\
\ \ \ \ &\le&C\big[\|A^{1/4}u(t)\|_2+\|A^{1/4}\tilde{u}(t)\|_2\bigr]\|A^{1/4}w(t)\|_2\|A^{1/2}\tilde{u}(t)\|_2\|w_t(t)\|_2\\
\ \ \ \ &\le&C\bigl(\|A^{1/2}w(t)\|^2_2+\|w_t(t)\|^2_2\bigr).
\end{eqnarray*}
%Noting that the condition (), we can estimate $J_4$ likewise $I_5$
From the condition (\ref{eq:3.2}), we can immediately see that there exists a constant $\sigma_0>0$ such that
\begin{equation}
|f(u)-f(v)|\le\sigma_0(1+|u|^{p/2}+|v|^{p/2})|u-v|,\quad\forall u,v\in\mathbb{R},
\label{eq:a}
\end{equation}
and we can estimate $J_4$ likewise $I_5$
\begin{eqnarray*}
|J_4|&\le&\sigma_0\bigl(|\Omega|^{\frac{p}{2(p+2)}}+\|u(t)\|^{p/2}_{p+2}+\|\tilde{u}(t)\|^{p/2}_{p+2}\bigr)\|w(t)\|_{p+2}\|w_t(t)\|_2\\
\ \ \ \ &\le&C\|A^{1/2}w(t)\|_2\|w_t(t)\|_2\\
\ \ \ \ &\le&C\bigl(\|A^{1/2}w(t)\|^2_2+\|w_t(t)\|^2_2\bigr).
\end{eqnarray*}
Using these four estimates in (\ref{eq:3.29}) and taking $\epsilon>0$ small enough, there exists a constant $C>0$ such that
\begin{eqnarray}
\begin{split}
\frac{1}{2}\frac{d}{dt}\Bigl\{ \|A^{1/2}w(t)\|^2_2+\|w_t(t)\|^2_2+\alpha\|A^{\theta/4}w_t(t)\|^2_2\Bigr\}+N_0\|A^{\theta'/4}w_t(t)\|^2_2\qquad\qquad\qquad\\
\le C(1+\|A^{\theta'/4}\tilde{u}_t(t)\|^2_2)\bigl(\|A^{1/2}w(t)\|^2_2+\|w_t(t)\|^2_2+\alpha\|A^{\theta/4}w_t(t)\|^2_2\bigr)
\label{eq:3.30}
\end{split}
\end{eqnarray}
for any $t\in[0,T]$. From the estimate (\ref{eq:3.17}) the function $1+\|A^{\theta'/4}u_t(\cdot)\|^2_2$ is integrable on $[0,T]$. Then integrating (\ref{eq:3.30}) on $[0,t]$ and using Gronwall's inequality, we arrive at
\begin{eqnarray}
\begin{split}
\|A^{1/2}w(t)\|^2_2+\|w_t(t)\|^2_2+\alpha\|A^{\theta/4}w_t(t)\|^2_2+N_0\int^t_0\|A^{\theta'/4}w_t(s)\|^2_2ds\qquad\qquad\qquad\qquad\\
\le e^{C_T}\Bigl(\|A^{1/2}w(0)\|^2_2+\|w_t(0)\|^2_2+\alpha\|A^{\theta/4}w_t(0)\|^2_2\Bigr)
\label{eq:3.31}
\end{split}
\end{eqnarray}
for some positive constant $C_T=C_T(\|(u_0,u_1)\|_{\mathcal{H}_{1, 1}})$. (\ref{eq:3.31}) shows the continuous dependence of strong solutions on the initial data in $\mathcal{H}_{\alpha, \theta}$. 

The same conclusion holds for weak solutions by using the density argument. In fact, if we consider the initial data $z_0=(u_0,u_1),\tilde{z_0}=(\tilde{u}_0,\tilde{u}_1)\in\mathcal{H}_{\alpha, \theta}$, then  similarly to (\ref{eq:3.21}) there exist sequences of strong solutions $z^k=(u^k,u^k_t)$ and $\tilde{z}^k=(\tilde{u}^k,\tilde{u}^k_t)$ such that
\[ (z^k,\tilde{z}^k)\ \to\ (z,\tilde{z})\ \ \text{in}\ \ C([0,T],\mathcal{H}_{\alpha, \theta}\times \mathcal{H}_{\alpha, \theta}). \]
The difference $z^k-\tilde{z}^k:=(w^k,w^k_t)$ satisfies (\ref{eq:3.31}) for each $k\in\mathbb{N}$, hence the estimate (\ref{eq:3.8}) holds for the difference of weak solutions $z-\tilde{z}$ after passing the limit as $k\to\infty$. 

Particularly, we have uniqueness of both strong and weak solutions. This completes the proof of Theorem \ref{thm:3.1}.\qed

\section{Stability and long-time dynamics}
In this section we show the existence of attractors and clarifying their properties. First of all, we introduce a couple of notions of infinite-dimensional dynamical systems. 

\subsection{Review on dynamical system}
Here, we introduce some important concepts we shall need for proving the statement on stability properties:
\begin{defn}
Let $X$ be a Banach space and let $B$ be a bounded set in $X$. The {\bf Kuratowski measure of noncompactness} is the nonnegative, real-valued function $\kappa(B)$ on $X$ defined by
\[ \kappa(B):=\inf\{d\ |\ B\text{ has a finite cover of open balls whose radius is less than $d$  }  \}. \]
\end{defn}

\begin{defn}
Let $S$ be a semiflow on a Banach space $X$.
\renewcommand{\theenumi}{\roman{enumi}}
\begin{enumerate}
\item  A closed set $B\subset X$ is said to be {\bf absorbing} for $(X,S(t))$ if and only if for any bounded set $D$ $\subset X$ there exists $t_0(D)$ such that $S(t)D\subset B$ for all $t\ge t_0(D)$.
\item $(X,S(t))$ is said to be {\bf dissipative} if and only if it possesses a bounded absorbing set $B$.
\item Let $\kappa$ be the Kuratowski measure on $X$. The semiflow $S$ is said to be {\bf uniformly} $\kappa$-{\bf contracting} if and only if there is a $\tau\ge0$ and a nonnegative function $\phi(t)$ with $\phi(t)\to0$, as $t\to\infty$, such that for every bounded set $B\subset X$ one has $\kappa(S(t)B)\le\phi(t)\kappa(B)$, for all $t>\tau$.
\end{enumerate}
\end{defn}

\begin{defn}
Let $B\subseteq X$ be a positively invariant set of a dynamical system $(X,S(t))$.
\renewcommand{\theenumi}{\roman{enumi}}
\begin{enumerate}
\item The continuous functional $\Phi$ defined on $B$ is said to be the {\bf Lyapunov function} for the dynamical system $(X,S(t))$ on $B$ if and only if the function $t\mapsto \Phi(S(t)z)$ is a non-increasing function for any $z\in B$.
\item The Lyapunov function $\Phi$ is said to be {\bf strict} on $B$ if and only if for $z\in B$, the equation $\Phi(S(t)z)=\Phi(z)$ for all $t>0$ implies that $S(t)z=z$ for all $t>0$; that is, $z$ is a stationary point of $(X,S(t))$.
\item The dynamical system $(X,S(t))$ is said to be {\bf gradient} if and only if there exists a strict Lyapunov function for $(X,S(t))$ on the whole phase space $X$.
\end{enumerate}
\end{defn}

\noindent
We note the properties of $\kappa$-$measure$ in Proposition \ref{prop:k} below, which will be used later on (for the details we refer to \cite{I. Chueshov and I. Lasiecka:2}).

\begin{prop}
The $Kuratowski\ measure\ of\ noncompactness$ $\kappa$ on a Banach space $X$ satisfies the following properties:
\renewcommand{\theenumi}{\roman{enumi}}
\begin{enumerate}
\item $\kappa(B)=0$ if and only if $B$ is precompact.
\item If $B_1\subset B_2$, then $\kappa(B_1)\le\kappa(B_2)$.
\item $\kappa(B_1\cup B_2)=\max(\kappa(B_1),\kappa(B_2))$.
\item $\kappa(B)=\kappa(\bar{B})$,\quad where $\bar{B}$ denotes the closure of $B$.
\item $\kappa(B_1+B_2)\le\kappa(B_1)+\kappa(B_2)$.
\item If $B_t$ is a family of nonempty, closed, bounded sets defined for $t>0$ that satisfy $B_s\supset B_t$ whenever $(0\le) s\le t$, and $\kappa(B_t)\to0$, as $t\to\infty$, then $\cap_{t>0}B_t$ is a nonempty, compact set in $X$.
\end{enumerate}
\label{prop:k}
\end{prop}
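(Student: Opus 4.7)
The plan is to verify the six items in order. Properties (i)--(v) follow from direct unpacking of the definition together with the fact that in a Banach space total boundedness is equivalent to precompactness (completeness is used only here). The genuine content of the proposition lies in (vi), which is the Kuratowski intersection theorem.

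For (i), by definition $\kappa(B)=0$ means that for every $\epsilon>0$ the set $B$ admits a finite cover by open balls of radius less than $\epsilon$, i.e., $B$ is totally bounded; completeness of $X$ then yields that $\overline{B}$ is compact. Property (ii) is immediate, since any finite ball cover of $B_2$ also covers $B_1\subset B_2$, so the infimum over admissible radii can only decrease. In (iii), one direction follows from (ii); conversely, given $d_i>\kappa(B_i)$ and finite covers of $B_i$ by balls of radius less than $d_i$, their union is a finite cover of $B_1\cup B_2$ by balls of radius less than $\max(d_1,d_2)$. For (iv), $\kappa(B)\le\kappa(\overline{B})$ comes from (ii), and the reverse inequality from the elementary observation that the closures of a finite family of open balls of radius $r<d$ covering $B$ still cover $\overline{B}$ and each sits inside an open ball of any radius strictly greater than $r$, still less than $d$. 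Item (v) is obtained by taking finite covers $B_j\subset\bigcup_i B(x^j_i,r_j)$ with $r_j$ arbitrarily close to $\kappa(B_j)$; then $B_1+B_2\subset\bigcup_{i,k}B(x^1_i+x^2_k,r_1+r_2)$, and passing to infima gives the subadditivity.

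The substantive step is (vi). The plan is to pick a sequence $t_n\to\infty$ and points $x_n\in B_{t_n}$ (nonemptiness of each $B_t$ is used here), and then to show that the sequence $S:=\{x_n:n\ge 1\}$ is precompact. For each $m$, nestedness yields $\{x_n:n\ge m\}\subset B_{t_m}$; since $\kappa$ is unchanged by adjoining the finite set $\{x_1,\dots,x_{m-1}\}$, properties (ii) and (iii) give $\kappa(S)\le\kappa(B_{t_m})$, and letting $m\to\infty$ forces $\kappa(S)=0$. By (i), some subsequence $x_{n_k}$ converges to a limit $x^*\in X$. For every fixed $s>0$ we have $x_{n_k}\in B_{t_{n_k}}\subset B_s$ for all sufficiently large $k$, and closedness of $B_s$ forces $x^*\in B_s$; hence $x^*\in\bigcap_{t>0}B_t$, proving nonemptiness. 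Compactness of $K:=\bigcap_{t>0}B_t$ then follows because $K\subset B_s$ for every $s$, so $\kappa(K)\le\kappa(B_s)\to 0$; together with $K$ being closed as an intersection of closed sets, (i) yields that $K$ is compact. The only delicate point is the extraction argument in (vi), which must be organised so that (i), (ii) and (iii) are applied in the correct order; everything else is a routine translation of the definition.
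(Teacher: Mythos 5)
Your proof is correct. Note, though, that the paper does not actually prove Proposition~\ref{prop:k}: it states the properties and defers entirely to the reference of Chueshov and Lasiecka, so there is no in-paper argument to compare against. What you have written is the standard proof one finds in that reference (and in any text on measures of noncompactness): items (i)--(v) by direct manipulation of finite ball covers, with completeness entering only to pass from total boundedness to precompactness in (i), and item (vi) --- the Kuratowski intersection theorem --- via the diagonal sequence $x_n\in B_{t_n}$, the observation that adjoining finitely many points does not change $\kappa$, and closedness of the $B_s$ to place the limit point in the intersection. Two trivial points of hygiene: in (vi) you should fix the $t_n$ to be increasing so that $\{x_n : n\ge m\}\subset B_{t_m}$ really follows from nestedness, and in (i) you should also record the (immediate) converse direction that a precompact set is totally bounded and hence has $\kappa=0$. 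Neither affects the validity of the argument.
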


\noindent
We show the existence of attractors and their structure by using the following criteria:

\begin{prop}(See \cite{I. Chueshov and I. Lasiecka:2})
Suppose that the dynamical system $(X,S(t))$ possesses the following properties:
\renewcommand{\theenumi}{\roman{enumi}}
\begin{enumerate}
\item The dynamical system $(X,S(t))$ is dissipative.
\item The semiflow $S$ is uniformly $\kappa$-contracting.
\end{enumerate}
Then the system $(X,S(t))$ possesses the global attractor.
\label{prop:exi}
\end{prop}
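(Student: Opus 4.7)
The plan is to construct $\mathcal{A}$ explicitly as the $\omega$-limit set of the absorbing set, then leverage properties (i)--(vi) of the Kuratowski measure listed in Proposition \ref{prop:k} together with the uniform $\kappa$-contraction to obtain compactness, invariance, and attraction.

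First I would upgrade the absorbing set $B$ to a positively invariant absorbing set $\tilde B := \overline{\bigcup_{s\ge t_0}S(s)B}$, where $t_0=t_0(B)$ is such that $S(t)B\subset B$ for $t\ge t_0$. The semigroup property yields $S(t)\tilde B\subset\tilde B$ for all $t\ge 0$, and $\tilde B\subset\overline{B}$ is still bounded and absorbs every bounded set. Define the candidate attractor
\[ \mathcal{A} := \bigcap_{t\ge 0} B_t, \qquad B_t := \overline{S(t)\tilde B}. \]
By positive invariance the family $\{B_t\}_{t\ge0}$ is nested ($B_s\supset B_t$ for $s\le t$), and each $B_t$ is nonempty, closed, and bounded. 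By the uniform $\kappa$-contraction, for $t>\tau$ we have $\kappa(B_t)=\kappa(S(t)\tilde B)\le\phi(t)\,\kappa(\tilde B)\to 0$. Proposition \ref{prop:k}(vi) then yields that $\mathcal{A}$ is nonempty and compact in $X$.

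Next I would establish invariance $S(t)\mathcal{A}=\mathcal{A}$. The inclusion $S(t)\mathcal{A}\subset\mathcal{A}$ follows from continuity of $S(t)$, the identity $S(t)\overline{S(s)\tilde B}\subset\overline{S(t+s)\tilde B}=B_{t+s}$, and nestedness. For the reverse inclusion I use the standard $\omega$-limit characterization: any $x\in\mathcal{A}$ admits sequences $t_n\to\infty$ and $b_n\in\tilde B$ with $S(t_n)b_n\to x$; for fixed $t\ge 0$ the sequence $\{S(t_n-t)b_n\}$ (for $t_n>t$) lies in $B_{t_n-t}$, which has vanishing Kuratowski measure, so a subsequence converges to some $y\in\mathcal{A}$, and continuity gives $S(t)y=x$.

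Finally I would prove uniform attraction. Suppose by contradiction that for some bounded $D$ there exist $\delta>0$ and $t_n\to\infty$ with $h(S(t_n)D,\mathcal{A})\ge\delta$; pick $d_n\in D$ such that $\mathrm{dist}_X(S(t_n)d_n,\mathcal{A})\ge\delta/2$. For large $n$ the absorbing property forces $S(t_0)d_n\in\tilde B$, hence $y_n:=S(t_n)d_n\in B_{t_n-t_0}$. Since $\kappa(B_{t_n-t_0})\to 0$, Proposition \ref{prop:k} implies $\{y_n\}$ is precompact; any subsequential limit $y$ lies in $\bigcap_t B_t=\mathcal{A}$, contradicting $\mathrm{dist}_X(y_n,\mathcal{A})\ge\delta/2$.

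The main obstacle is the invariance step: assembling consistent subsequential limits at every time $t$ requires careful use of the $\kappa$-contraction applied to the shifted family $\{B_{t_n-t}\}$ together with Proposition \ref{prop:k}(vi), and ensuring that the continuous dependence on initial data (Theorem \ref{thm:3.1}(iii), which guarantees $S(t)$ is continuous on $X$) is strong enough to pass to the limit. Once this is handled, compactness and attraction follow from the same $\kappa$-contraction argument, and connectedness (not part of this proposition but needed for Theorem \ref{thm:4.1}(i)) would follow by a separate standard argument using path-connectedness of bounded sets under a continuous semiflow.
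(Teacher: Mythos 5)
The paper offers no proof of this proposition at all --- it is stated with a citation to Chueshov--Lasiecka and used as a black box --- so there is nothing internal to compare against; your argument, constructing $\mathcal{A}=\bigcap_{t\ge0}\overline{S(t)\tilde B}$ and extracting nonemptiness, compactness, invariance and attraction from the nested family with vanishing Kuratowski measure via Proposition \ref{prop:k}(vi), is precisely the standard proof from the cited reference and is correct. The only blemish is notational: in the attraction step the time at which $D$ is absorbed into $\tilde B$ should be $t_0(D)+t_0$ rather than the fixed $t_0$ used for $B$ itself, which does not affect the argument.
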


\begin{prop}(See \cite{I. Chueshov and I. Lasiecka:2})
Let a dynamical system $(X,S(t))$ possesses a compact global attractor $A$. Assume also that the Lyapunov function $\Phi$ exists on $A$. Then 
\renewcommand{\theenumi}{\roman{enumi}}
\begin{enumerate}
\item $A=\mathcal{M}^u(\mathcal{N})$, where $\mathcal{N}$ is the set of stationary points of the dynamical system.
\item $A_{\min}=\mathcal{N}$, where $A_{\min}$ is the minimal attractor of the dynamical system $(X,S(t))$.
\end{enumerate}
\label{prop:stru}
\end{prop}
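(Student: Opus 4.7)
The plan is to combine compactness of the global attractor with strict descent of $\Phi$ along trajectories; the hypothesis ``Lyapunov function exists'' must be read in the strict sense of the definition given earlier (as in the cited Chueshov--Lasiecka reference), since without strictness non-equilibrium orbits along which $\Phi$ is constant would obstruct both conclusions.

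For part $(i)$, I would prove the two inclusions separately. First, $\mathcal{N} \subset A$ since each stationary point generates a bounded complete trajectory and such orbits lie in any global attractor. Given $x \in \mathcal{M}^u(\mathcal{N})$, the associated full trajectory $u(\cdot)$ satisfies $\text{dist}(u(t),\mathcal{N}) \to 0$ as $t \to -\infty$ and is therefore bounded on $(-\infty,0]$; combined with forward-boundedness via the absorbing set, the whole orbit is bounded and hence contained in $A$, so $x = u(0) \in A$. For the reverse inclusion, fix $x \in A$; invariance of $A$ produces a complete trajectory $u : \mathbb{R} \to A$ with $u(0) = x$. The $\alpha$-limit set
\[ \alpha(x) := \bigcap_{s \le 0}\overline{\{u(t) : t \le s\}} \]
is nonempty, compact and fully invariant thanks to compactness of $A$ and continuity of $S(t)$. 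Because $\Phi$ is non-increasing along $u(\cdot)$ and bounded on the compactum $A$, the limit $c_- := \lim_{t \to -\infty}\Phi(u(t))$ exists, and continuity of $\Phi$ gives $\Phi \equiv c_-$ on $\alpha(x)$. For any $y \in \alpha(x)$, full invariance implies $S(t)y \in \alpha(x)$ for all $t \ge 0$, hence $\Phi(S(t)y) = c_- = \Phi(y)$; strictness of $\Phi$ then forces $S(t)y = y$, i.e.\ $y \in \mathcal{N}$. Thus $\alpha(x) \subset \mathcal{N}$, so $\text{dist}(u(t),\mathcal{N}) \to 0$ as $t \to -\infty$, giving $x \in \mathcal{M}^u(\mathcal{N})$.

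For part $(ii)$, I would verify the axioms of a minimal attractor for $\mathcal{N}$. Positive invariance is immediate since each element of $\mathcal{N}$ is a fixed point of $S(t)$. Attraction of every $x \in X$ is obtained by the symmetric $\omega$-limit argument: $\omega(x) \subset A$ is nonempty, compact and invariant, $\Phi$ is constant on $\omega(x)$ by forward-monotonicity of $\Phi$ along the orbit and continuity, and strictness yields $\omega(x) \subset \mathcal{N}$, whence $\text{dist}(S(t)x,\mathcal{N}) \to 0$. For minimality, suppose $B \subsetneq \mathcal{N}$ is positively invariant and attracts every point; replacing $B$ by its closure (which preserves both properties) any $e \in \mathcal{N} \setminus B$ satisfies $S(t)e = e$ and hence $\text{dist}(e,B) = \lim_{t \to \infty}\text{dist}(S(t)e,B) = 0$, so $e \in B$, a contradiction. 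Thus $A_{\min} = \mathcal{N}$.

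The main obstacle is the passage from ``$\Phi$ is constant on the $\alpha$- or $\omega$-limit set'' to ``that set consists of stationary points''. This step genuinely requires both strictness of $\Phi$ and full invariance of the limit set, which together produce a full orbit through $y$ along which $\Phi$ is constant, so that strictness pins it down as an equilibrium; without either ingredient the invariant structure could contain periodic orbits or heteroclinic networks and the conclusions would fail. The remaining ingredients are standard invariance and compactness bookkeeping already encoded in the existence of the compact global attractor.
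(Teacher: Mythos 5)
The paper offers no proof of Proposition \ref{prop:stru}: it is quoted from the Chueshov--Lasiecka monograph and invoked later only after checking that $E_{\alpha,\theta}$ is a (strict) Lyapunov function, so there is nothing internal to compare your argument against. What you have written is the standard proof of that cited structure theorem for gradient systems, and it is essentially correct. You are right to insist that the hypothesis must be read as a \emph{strict} Lyapunov function in the sense of the paper's Definition 4.3(ii); with only monotonicity the limit sets could contain nontrivial recurrent orbits and both conclusions fail, and indeed the paper's later application verifies gradientness (strictness on all of $X$), not mere monotonicity. Two small points worth making explicit. First, in part (i) the step from $\alpha(x)\subset\mathcal{N}$ to $\mathrm{dist}(u(t),\mathcal{N})\to0$ as $t\to-\infty$ uses that the backward orbit is precompact (it lies in the compact set $A$), so that any sequence $u(t_n)$ with $t_n\to-\infty$ has a subsequence converging into $\alpha(x)$; this is the same compactness you already invoke but should be named, since for a general complete trajectory outside a compact set the $\alpha$-limit set need not attract the backward orbit. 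Second, your minimality argument actually shows $\overline{B}=\mathcal{N}$ for any competitor $B$, which rules out proper \emph{closed} subsets; since the paper's Definition 2.2 does not require $A_{\min}$ to be closed, a proper dense subset of $\mathcal{N}$ consisting of equilibria would formally satisfy clauses (i)--(ii). This is a defect of the stated definition rather than of your proof (attractors and minimal attractors are closed by convention in the cited source), but it deserves a sentence. With those two clarifications the argument is complete.
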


\subsection{Global attractor and minimal attractor}

Let us show the existence of the global attractor of $(\mathcal{H}_{\alpha, \theta},S_{\alpha, \theta}(t))$ according to the Proposition \ref{prop:exi}. First we show the dissipativity of dynamical system $(\mathcal{H}_{\alpha, \theta},S_{\alpha, \theta}(t))$.

\subsubsection{Dissipativity of the dynamical system $(\mathcal{H}_{\alpha, \theta},S_{\alpha, \theta}(t))$}
\begin{prop}
Let us assume that $(u_0,u_1)\in \mathcal{H}_{\alpha, \theta}$ and the hypotheses of Theorem \ref{thm:3.1} and \ref{thm:4.1} hold. 
Then, if $z=(u,u_t)$ is a weak solution corresponding to the initial data $z_0=(u_0,u_1)$, we have
\begin{equation}
\|z(t)\|^2_{\mathcal{H}_{\alpha,\theta}}\le K_1e^{-\delta t}+K_2, \ \ \ \forall t>0, 
\label{eq:4.4}
\end{equation}
for some positive constants $K_1=K_1(\|z_0\|_{\mathcal{H}_{\alpha, \theta}})$ and $K_2=K_2(h, l_0, l_1, l_2, \Omega)$ and a small constant $\delta>0$.
\label{thm:dis}
\end{prop}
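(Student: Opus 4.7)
The plan is to build a perturbed energy in the spirit of Jorge Silva--Narciso and derive a differential inequality of the form $\frac{d}{dt}\Phi_{\alpha,\theta} + \delta\Phi_{\alpha,\theta} \le C$, whence (\ref{eq:4.4}) follows by Gronwall and an equivalence argument. Concretely, with a parameter $\varepsilon > 0$ to be fixed small later, I would introduce
\[
\Psi_{\alpha,\theta}(t) := \bigl((1+\alpha A^{\theta/2})u_t(t),u(t)\bigr) = (u_t,u) + \alpha(A^{\theta/4}u_t,A^{\theta/4}u),
\]
and set $\Phi_{\alpha,\theta}(t) := E_{\alpha,\theta}(t) + \varepsilon\Psi_{\alpha,\theta}(t)$. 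This $\Psi_{\alpha,\theta}$ is the natural choice because, when differentiated along (\ref{eq:2.1}), it produces the coercive contributions $-\|A^{1/2}u\|_2^2$, $-M(\|A^{1/4}u\|_2^2)\|A^{1/4}u\|_2^2$ and $-(f(u),u)$ that the dissipation $-N(\|A^{1/4}u\|_2^2)\|A^{\theta'/4}u_t\|_2^2$ coming from (\ref{eq:3.10}) alone does not provide.

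A preliminary step is to check the equivalence
\[
\tfrac{1}{C}\|z(t)\|_{\mathcal{H}_{\alpha,\theta}}^2 - C' \;\le\; \Phi_{\alpha,\theta}(t) \;\le\; C\|z(t)\|_{\mathcal{H}_{\alpha,\theta}}^2 + C'
\]
for $\varepsilon$ small, with $C, C'$ depending only on $\lambda_1, h, l_0, l_1, l_2, |\Omega|$. This comes from (\ref{eq:3.14}), the assumption (\ref{eq:3.4}), and Young's inequality applied to $\Psi_{\alpha,\theta}$ via $\|u\|_2 \le \lambda_1^{-1/2}\|A^{1/2}u\|_2$ and $\|A^{\theta/4}u\|_2 \le \lambda_1^{\theta/4-1/2}\|A^{1/2}u\|_2$. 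Simultaneously, the non-increase of $E_{\alpha,\theta}$ obtained from (\ref{eq:3.12}) forces $\|A^{1/4}u(t)\|_2^2$ to remain in a bounded interval along the whole orbit, so that $N(\|A^{1/4}u(t)\|_2^2)$ is bracketed between two positive constants $N_0, N_\infty$ (depending on $\|z_0\|_{\mathcal{H}_{\alpha,\theta}}$) for every $t\ge 0$.

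The core computation is to differentiate $\Psi_{\alpha,\theta}$ and use (\ref{eq:2.1}) together with the self-adjointness of $A^{\theta/2}$ to obtain
\begin{align*}
\tfrac{d}{dt}\Psi_{\alpha,\theta} &= \|u_t\|_2^2 + \alpha\|A^{\theta/4}u_t\|_2^2 - \|A^{1/2}u\|_2^2 - M(\|A^{1/4}u\|_2^2)\|A^{1/4}u\|_2^2 \\
&\quad - N(\|A^{1/4}u\|_2^2)(A^{\theta'/4}u_t,A^{\theta'/4}u) - (f(u),u) + (h,u).
\end{align*}
Hypothesis (H3) converts $-(f(u),u)$ into $-\int_\Omega\tilde{f}(u)\,dx$ up to a small multiple of $\|A^{1/2}u\|_2^2$ and a constant; hypothesis (H4) converts $-M(\|A^{1/4}u\|_2^2)\|A^{1/4}u\|_2^2$ into $-\tfrac{1}{2}\widetilde{M}(\|A^{1/4}u\|_2^2)$ plus another small multiple of $\|A^{1/2}u\|_2^2$ and a constant. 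The term $(h,u)$ is absorbed by Young's inequality, and the indefinite cross term $N(\cdot)(A^{\theta'/4}u_t,A^{\theta'/4}u)$ is split so that the $u_t$ factor is absorbed into the dissipation $N_0\|A^{\theta'/4}u_t\|_2^2$ coming from (\ref{eq:3.10}), while the $u$ factor is absorbed into $\|A^{1/2}u\|_2^2$ via the embedding $D(A^{1/2})\hookrightarrow D(A^{\theta'/4})$. Finally, the positive contribution $\varepsilon(\|u_t\|_2^2+\alpha\|A^{\theta/4}u_t\|_2^2)$ inside $\frac{d}{dt}\Psi_{\alpha,\theta}$ is controlled by a fraction of $N_0\|A^{\theta'/4}u_t\|_2^2$ through the inclusion $D(A^{\theta'/4})\hookrightarrow D(A^{\theta/4})\cap L^2$, which is available because $\theta\le\theta'$, at the cost of forcing $\varepsilon$ to be small and uniform in $\alpha\in[0,1]$.

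Combining these inequalities and invoking the equivalence of $\Phi_{\alpha,\theta}$ with $\|z\|_{\mathcal{H}_{\alpha,\theta}}^2$ produces $\frac{d}{dt}\Phi_{\alpha,\theta} + \delta\Phi_{\alpha,\theta} \le C_2$, from which Gronwall yields (\ref{eq:4.4}). The main obstacle, in my view, is to calibrate the Young splittings on the indefinite cross term $N(\cdot)(A^{\theta'/4}u_t,A^{\theta'/4}u)$ and on $(h,u)$: one must leave enough of the dissipation $N_0\|A^{\theta'/4}u_t\|_2^2$ to dominate simultaneously this cross term and the positive velocity contributions brought in by $\Psi_{\alpha,\theta}$, while at the same time retaining a strictly positive coefficient in front of $\|A^{1/2}u\|_2^2$ to reconstruct $E_{\alpha,\theta}$ on the right-hand side. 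This bookkeeping is what dictates the admissible size of $\varepsilon$, hence of $\delta$, and is the place where the condition $\theta\le\theta'$ is essentially used.
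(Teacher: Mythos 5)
Your proposal is correct and follows essentially the same route as the paper: the same perturbation $\Psi_{\alpha,\theta}(t)=(u_t,u)+\alpha(A^{\theta/4}u_t,A^{\theta/4}u)$ added to $E_{\alpha,\theta}$ with a small weight, hypotheses (H3) and (H4) used to reconstruct $-E_{\alpha,\theta}$ from $-(f(u),u)$ and $-M(\cdot)\|A^{1/4}u\|_2^2$, the cross term and the velocity contributions absorbed into the dissipation $N_0\|A^{\theta'/4}u_t\|_2^2$ via $\theta\le\theta'$, and Gronwall together with the equivalence of the perturbed energy with $\|z\|^2_{\mathcal{H}_{\alpha,\theta}}$. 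The only cosmetic difference is that the paper sandwiches $E_{\alpha,\theta,\epsilon}$ between multiples of $E_{\alpha,\theta}$ and then invokes (\ref{eq:3.14}), rather than comparing directly with the phase-space norm.
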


\begin{proof}

We deal with only strong solutions, because the same conclusion follows easily for weak solutions using the density argument.

The strong solution $z=(u,u_t)$ satisfies the following estimates
\begin{equation}
\frac{1}{4}\|z(t)\|^2_{\mathcal{H}_{\alpha, \theta}}=\frac{1}{4}\Bigl(\|A^{1/2}u(t)\|^2_2+\|u_{t}(t)\|^2_2+\alpha\|A^{\theta/4}u_t(t)\|^2_2\Bigr)\le E_{\alpha, \theta}(t)+\frac{2}{\lambda_1}\|h\|^2_2+l_0|\Omega|,
\label{eq:4.5}
\end{equation}
and
\begin{equation*}
\frac{d}{dt}E_{\alpha, \theta}(t)+N\Bigl(\|A^{1/4}u(t)\|^2_2\Bigr)\|A^{\theta'/4}u_t(t)\|^2_2=0,
\end{equation*}
where the energy $E_{\alpha, \theta}(t)$ is given in (\ref{eq:3.11}). From $N(\tau)>0$ and the estimate (\ref{eq:3.17}), we get
\begin{equation}
\frac{d}{dt}E_{\alpha, \theta}(t)\le-N_0\|A^{\theta'/4}u_t(t)\|^2_2,
\label{eq:4.6}
\end{equation}
for some positive constant $N_0>0$ depending on the initial data in $\mathcal{H}_{\alpha, \theta}$. Now we define for any $\epsilon>0$ a perturbed energy
\begin{equation*}
E_{\alpha, \theta, \epsilon}(t):=E_{\alpha, \theta}(t)+\epsilon\Psi_{\alpha, \theta}(t)\quad\text{with}\quad\Psi_{\alpha, \theta}(t) :=(u_t(t),u(t))+\alpha(A^{\theta/4}u_t(t),A^{\theta/4}u(t)).
\end{equation*}
In the following we use $C_0,C_1,C_2$ to denote several positive constants appearing in the estimates. Firstly, we claim that there exists a constant $C>0$ such that
\begin{equation}
\frac{d}{dt}\Psi_{\alpha, \theta}(t)\le-E_{\alpha, \theta}(t)+C_0\|A^{\theta'/4}u_t(t)\|^2_2+l_1|\Omega|+l_2.
\label{eq:4.7}
\end{equation}
In fact, taking derivative of function $\Psi_{\alpha}(t)$, using the weak formulation (\ref{eq:3.9}), adding and subtracting $E_{\alpha}(t)$ into the resulting expression, we obtain
\begin{equation}
\frac{d}{dt}\Psi_{\alpha, \theta}(t)=-E_{\alpha, \theta}(t)+\frac{3}{2}\Bigl(\|u_t(t)\|^2_2+\alpha\|A^{\theta/4}u_t(t)\|^2_2\Bigr)-\frac{1}{2}\|A^{1/2}u(t)\|^2_2+\sum^3_{j=1}L_j,
\label{eq:4.8}
\end{equation}
where
\begin{eqnarray*}
L_1&=&\frac{1}{2}\widetilde{M}\Bigl(\|A^{1/4}u(t)\|^2_2\Bigr)-M\Bigl(\|A^{1/4}u(t)\|^2_2\Bigr)\|A^{1/4}u(t)\|^2_2,\\
L_2&=&-N\Bigl(\|A^{1/4}u(t)\|^2_2\Bigr)(A^{\theta'/4}u_t(t),A^{\theta'/4}u(t)),\\
L_3&=&\int_{\Omega}\tilde{f}(u(t))dx-(f(u(t)),u(t)).
\end{eqnarray*}
Now we estimate $L_1,L_2$ and $L_3$. From the condition (\ref{eq:4.2}) and embedding $D(A^{1/2}) \hookrightarrow D(A^{1/4})$, we get

\[ |L_1|\ \le\ \frac{1}{8}\|A^{1/2}u(t)\|^2_2+l_2. \]
Using Young's inequality and the embedding $D(A^{1/2}) \hookrightarrow D(A^{1/4})$, we obtain
\begin{eqnarray*}
|L_2|&\le&N\Bigl(\|A^{1/4}u(t)\|^2_2\Bigr)\|A^{\theta'/4}u_t(t)\|_2\|A^{1/2}u(t)\|_2\\
\ \ \  \ &\le&\frac{1}{8}\|A^{1/2}u(t)\|^2_2+C_0\|A^{\theta'/4}u_t(t)\|^2_2.
\end{eqnarray*}
From the condition (\ref{eq:4.1}) we see that

\[ |L_3|\ \le \ \frac{1}{8}\|A^{1/2}u(t)\|^2_2+l_1|\Omega|. \]
Inserting these last three estimates in (\ref{eq:4.8}) and using the embedding $D(A^{\theta'/4}) \hookrightarrow L^2(\Omega)$, then we get (\ref{eq:4.7}).

Choosing $\epsilon>0$ small enough such that $\epsilon\le\frac{N_0}{C_0}$, then we have
\begin{equation}
\frac{d}{dt}E_{\alpha, \theta, \epsilon}(t)\le-\epsilon E_{\alpha, \theta}(t)+\epsilon(l_1|\Omega|+l_2),\ \ \forall t>0.
\label{eq:4.9}
\end{equation}
On the other hand, using Young's inequality and the estimate (\ref{eq:3.17}), there exists a constant $C_1>0$ such that
\begin{equation}
|E_{\alpha, \theta, \epsilon}(t)-E_{\alpha, \theta}(t)|\le\epsilon C_1(E_{\alpha, \theta}(t)+\|h\|^2_2+|\Omega|),\ \ \forall t>0,\ \forall\epsilon>0.
\label{eq:4.10}
\end{equation}
Let us take and fix $\epsilon>0$ small enough such that $\epsilon\le\min{\{\frac{N_0}{C_0},\frac{1}{2C_1}\}}$. Then the estimate (\ref{eq:4.10}) implies
\begin{equation}
-\frac{1}{2}(\|h\|^2_2+|\Omega|)+\frac{1}{2}E_{\alpha, \theta}(t)\le E_{\alpha, \theta, \epsilon}(t)\le\frac{3}{2}E_{\alpha, \theta}(t)+\frac{1}{2}(\|h\|^2_2+|\Omega|)
\label{eq:4.11}
\end{equation}
and  combining (\ref{eq:4.9}) with (\ref{eq:4.11}), we have
\begin{equation}
E_{\alpha, \theta}(t)\le3E_{\alpha, \theta}(0)e^{-\frac{2}{3}\epsilon t}+C,\ \ \forall t>0, 
\label{eq:4.12}
\end{equation}
where $C=C(\|h\|_2,|\Omega|)>0$. Therefore, again using (\ref{eq:3.14}) we conclude that (\ref{eq:4.4}) holds true.
\end{proof}

\begin{rem}
From Proposition \ref{thm:dis}, we immediately see that
\begin{equation*}
B_{\alpha, \theta}:=\{z\in\mathcal{H}_{\alpha, \theta}\ |\ \|z\|_{\mathcal{H}_{\alpha, \theta}}\le K_2+\delta'\}
\end{equation*}
is absorbing set of the system $(\mathcal{H}_{\alpha, \theta},S_{\alpha, \theta}(t))$, where $\delta'$ is an arbitrary positive constant and $K_2=K_2(h,l_0,l_1,l_2,|\Omega|)$. We can see easily that the system $(\mathcal{H}_{\alpha, \theta},S_{\alpha, \theta}(t))$ also has a  bounded positively invariant absorbing set $\mathcal{B}_{\alpha, \theta}$ described as follows
\begin{equation*}
\mathcal{B}_{\alpha, \theta}:=\overline{\bigcup_{t\ge t_{B_{\alpha, \theta}}}S_{\alpha, \theta}(t)B_{\alpha, \theta}}\ \bigl(\subset B_{\alpha, \theta}\bigr),
\end{equation*}
where the $t_{B_{\alpha, \theta}}>0$ is the time such that $B_{\alpha, \theta}$ absorbs itself. The definition of the norm $\|\cdot\|_{\mathcal{H}_{\alpha, \theta}}$ implies the inclusion relation such that 
\begin{equation*}
\text{if }\alpha\le\beta\ \Rightarrow\  B_{\beta, \theta}\subset B_{\alpha, \theta}\ (\subset B_0).
\end{equation*}
From this inclusion relation, any $\omega$-limit set of $\mathcal{B}_{\alpha, \theta}$ is included $B_0$:
\begin{equation*}
\omega(\mathcal{B}_{\alpha, \theta})=\bigcap_{t\ge0}\overline{\bigcup_{\tau\ge t}S_{\alpha, \theta}(\tau)\mathcal{B}_{\alpha, \theta}}\ (\subset B_{\alpha, \theta})\subset B_0.
\end{equation*}
This implies that if the $\omega$-limit set is nonempty, as mentioned above the each system $(\mathcal{H}_{\alpha, \theta},S_{\alpha, \theta}(t))$ has compact global attractor $\mathcal{A}_{\alpha,\theta}$ and attractors are included in the bounded set $B_0$. In particular, if $(u,u_t)$ is a solution of the problem (\ref{eq:2.1}) corresponding to initial data lying in $\omega(\mathcal{B}_{\alpha, \theta})$, then it is globally bounded in $\mathcal{H}_{0.\theta}$; that is 
\begin{equation}
\|(u(t),u_t(t))\|^2_{\mathcal{H}}=\|A^{1/2}u(t)\|^2_2+\|u_t(t)\|^2_2\le K_2+\delta',\quad\forall t\ge0.
\label{eq:rem1}
\end{equation}
The right-hand side of (\ref{eq:rem1}) does not depend on $\alpha$, and this fact will play the key roll in the  next section.
\label{rem:1}
\end{rem}

\subsubsection{Existence of the attractors and its structures}

There remains to prove the contractivity. For showing this property, we derive the following important inequality, which is said to be a stability inequality.

\begin{prop}
Let the assumption of Theorem \ref{thm:3.1} be in force. Given a bounded set $B\subset\mathcal{H}_{\alpha, \theta}$ we consider two weak solutions $z^1=(u,u_t), z^2=(v,v_t)$ corresponding to initial data $z^1_0=(u_0,u_1), z^2_0=(v_0,v_1)$ lying in $B$. Then the following stability inequality holds:
\begin{eqnarray}
\begin{split}
&\|z^1(t)-z^2(t)\|^2_{\mathcal{H}_{\alpha, \theta}} \\
& \quad \le Ce^{-\delta t}\|z^1_0-z^2_0\|^2_{\mathcal{H}_{\alpha, \theta}}+C\int^t_0e^{-\delta (t-s)}\biggl(\|A^{1/4}w(s)\|^2_2+\|w(s)\|^2_{p+2} \biggl)ds,
\label{eq:4.13}
\end{split}
\end{eqnarray}
for all $t>0$, where $C=C_B$ and $\delta=\delta_B$ are positive constants, and $w=u-v$.
\label{prop:st}
\end{prop}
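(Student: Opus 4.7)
The strategy is to adapt the perturbed–energy argument from the proof of Proposition \ref{thm:dis} to the difference $w := u-v$ of two weak solutions. By the density argument of Section 3 it suffices to treat strong solutions. Set
\[
E_w(t):=\tfrac{1}{2}\bigl(\|A^{1/2}w(t)\|_2^2+\|w_t(t)\|_2^2+\alpha\|A^{\theta/4}w_t(t)\|_2^2\bigr)=\tfrac{1}{2}\|z^1(t)-z^2(t)\|_{\mathcal H_{\alpha,\theta}}^2.
\]
Testing the difference equation (\ref{eq:3.23}) with $\omega=w_t$, using $N(\|A^{1/4}u\|_2^2)\ge N_0$ together with the dissipativity estimate (\ref{eq:4.4}), and re-estimating the quantities $J_1,\dots,J_4$ from (\ref{eq:3.25})--(\ref{eq:3.28}) so that the right-hand sides are kept in the \emph{weaker} norms $\|A^{1/4}w\|_2$ and $\|w\|_{p+2}$ rather than absorbed into $\|A^{1/2}w\|_2$ as was done in the uniqueness proof, we obtain
\[
\frac{d}{dt}E_w(t)+N_0\|A^{\theta'/4}w_t(t)\|_2^2 \le \eta\|A^{1/2}w(t)\|_2^2+C_{\eta,B}\bigl(\|A^{1/4}w(t)\|_2^2+\|w(t)\|_{p+2}^2\bigr),
\]
with $\eta>0$ at our disposal; Lipschitz bound (\ref{eq:a}) together with the embedding $D(A^{1/2})\hookrightarrow L^{p+2}(\Omega)$ and the uniform-in-$t$ control of $\|A^{1/4}u\|_2,\|A^{\theta'/4}\tilde u_t\|_2$ on $B$ are what make such a splitting possible.

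Mirroring Proposition \ref{thm:dis}, we then introduce the auxiliary functional
\[
\Psi_w(t):=(w_t(t),w(t))+\alpha(A^{\theta/4}w_t(t),A^{\theta/4}w(t)),
\]
the $\alpha$–summand being the correction dictated by the rotational inertia. Testing (\ref{eq:3.23}) with $\omega=w$, using $(A^{1/2}w,w)=\|A^{1/4}w\|_2^2$, and integrating by parts in $t$ the two $w_{tt}$ contributions, we arrive at
\begin{align*}
\frac{d}{dt}\Psi_w(t)={}& \|w_t(t)\|_2^2+\alpha\|A^{\theta/4}w_t(t)\|_2^2-\|A^{1/2}w(t)\|_2^2\\
&-M\bigl(\|A^{1/4}u\|_2^2\bigr)\|A^{1/4}w(t)\|_2^2+R_w(t),
\end{align*}
where $R_w$ collects the damping, the $M$- and $N$-difference and the $f$-difference terms. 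Each piece of $R_w$ is dominated, via Young's inequality, (\ref{eq:a}) and (\ref{eq:4.4}), by $\eta\|A^{1/2}w\|_2^2+C_\eta\bigl(\|A^{\theta'/4}w_t\|_2^2+\|A^{1/4}w\|_2^2+\|w\|_{p+2}^2\bigr)$.

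Combining the two inequalities through $\mathcal{E}_w:=E_w+\epsilon\Psi_w$, and exploiting that $|\Psi_w|\le 2 E_w$ uniformly in $\alpha\in[0,1]$ thanks to the symmetric $\alpha$-term built into $\Psi_w$, for $\epsilon$ small enough we get $\mathcal{E}_w\sim E_w$, and after fixing first $\eta$ and then $\epsilon$
\[
\frac{d}{dt}\mathcal{E}_w(t)+\delta\,\mathcal{E}_w(t)\le C_B\bigl(\|A^{1/4}w(t)\|_2^2+\|w(t)\|_{p+2}^2\bigr)
\]
for some $\delta=\delta_B>0$ and $C_B>0$. Multiplying by $e^{\delta t}$, integrating from $0$ to $t$, and reverting to $\|z^1-z^2\|_{\mathcal H_{\alpha,\theta}}^2$ delivers (\ref{eq:4.13}). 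The main difficulty will be to carry through the selection of $\eta$ and $\epsilon$ so that $\delta$ and $C_B$ are \emph{uniform} in $\alpha\in[0,1]$: this is precisely where the modification of the scheme from \cite{Jorge Silva and Narciso V} announced in the introduction intervenes, since without the symmetric $\alpha(A^{\theta/4}w_t,A^{\theta/4}w)$ correction in $\Psi_w$ the decay rate would degenerate as $\alpha\to 0$, which would be fatal for the $\alpha$-uniform estimates later used in Theorem \ref{thm:4.2}.
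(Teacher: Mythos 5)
Your overall architecture (energy identity for $w$, an $\alpha$-corrected auxiliary functional $\Psi_w$, a perturbed energy, Gronwall) is the same as the paper's, but two of the steps you compress into a single sentence are precisely where the real work lies, and as stated they do not go through.

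First, the term $J_1=-M\bigl(\|A^{1/4}u\|^2_2\bigr)(A^{1/2}w,w_t)$ cannot be ``re-estimated so that the right-hand side is kept in the weaker norms.'' Writing $(A^{1/2}w,w_t)=(A^{1/2-\theta'/4}w,A^{\theta'/4}w_t)$, the velocity factor absorbable by the damping forces the position factor into the norm $\|A^{1/2-\theta'/4}w\|_2$, which for $\theta'<1$ is strictly stronger than $\|A^{1/4}w\|_2$; one can interpolate back only at the price of $\eta\|A^{1/2}w\|^2_2$ with a constant that blows up, and for $\theta'=0$ (allowed by the hypotheses) no splitting of the form $\eta\|A^{1/2}w\|^2_2+C_\eta(\|A^{1/4}w\|^2_2+\|w\|^2_{p+2})+\epsilon\|A^{\theta'/4}w_t\|^2_2$ is available at all. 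The paper does not estimate $J_1$: it absorbs it into the time derivative by augmenting the energy with the term $M\bigl(\|A^{1/4}u(t)\|^2_2\bigr)\|A^{1/4}w(t)\|^2_2$ (the functional $F_{\alpha,\theta}$ in (\ref{eq:4.16})), which trades $J_1$ for the new term $J_5$ in (\ref{eq:4.17}); $J_5$ is manifestly of weak-norm type. This augmentation is the key idea of the proof and is missing from your plan; note it also requires $M\ge0$ from (H1) so that $F_{\alpha,\theta}$ still controls $\|z^1-z^2\|^2_{\mathcal{H}_{\alpha,\theta}}$ from below, as in (\ref{eq:4.31}).

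Second, you claim ``uniform-in-$t$ control of $\|A^{\theta'/4}\tilde u_t\|_2$ on $B$'' and accordingly fold the contribution of $J_2$ into a constant $C_{\eta,B}$. This is false for weak solutions: (\ref{eq:3.17}) gives only $\int_0^t\|A^{\theta'/4}v_t(s)\|^2_2\,ds\le C$ together with a pointwise bound on $\|v_t\|_2+\alpha^{1/2}\|A^{\theta/4}v_t\|_2$, and since $\theta\le\theta'$ (and $\alpha$ may vanish) there is no pointwise bound on $\|A^{\theta'/4}v_t(t)\|_2$. Consequently $J_2$ leaves the term $C_R\|A^{\theta'/4}v_t(t)\|^2_2\,\|A^{1/2}w(t)\|^2_2$ on the right-hand side of (\ref{eq:4.18}), which can be neither absorbed into $-\eta F_{\alpha,\theta}$ nor relegated to the weak-norm integral with a fixed constant. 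The paper handles it by keeping it as a time-dependent coefficient in the Gronwall exponent, $\phi_\eta(t)=-\eta/3+C_R\|A^{\theta'/4}v_t(t)\|^2_2$, whose integral is $\le-\eta t/3+\widetilde{C_R}$ by (\ref{eq:3.17}); this is exactly how the exponential decay rate survives, see (\ref{eq:4.28})--(\ref{eq:4.30}). Without this device your final differential inequality $\frac{d}{dt}\mathcal{E}_w+\delta\mathcal{E}_w\le C_B(\|A^{1/4}w\|^2_2+\|w\|^2_{p+2})$ is not established. (A minor further point: your order ``fix first $\eta$ then $\epsilon$'' is circular as written, since the damping constant multiplying $\|A^{\theta'/4}w_t\|^2_2$ coming from $\Psi_w$ depends on the Young parameter used there; the parameters must be fixed in the order: Young parameter inside the $\Psi_w$-estimate, then $\epsilon$, then the Young parameter inside the energy estimate.)
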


\begin{proof}

First of all we fix a bounded set $B\subset\mathcal{H}_{\alpha, \theta}$ and consider two weak solutions $z^1=(u,u_t), z^2=(v,v_t)$ with the initial data $z^1_0,z^2_0\in B$, that is  $\|z^1_0\|_{\mathcal{H}_{\alpha, \theta}},\|z^2_0\|_{\mathcal{H}_{\alpha, \theta}}\le R$, where $R>0$ depends on the size of $B$. Putting the difference $z^1-z^2=(w,w_t)$ and proceeding exactly as in the proof of the a priori estimates we get the following inequality 
\begin{equation}
\frac{1}{2}\frac{d}{dt}\Bigl\{\|A^{1/2}w(t)\|^2_2+\|w_t(t)\|^2_2+\alpha\|A^{\theta/4}w_t(t)\|^2_2\Bigr\}+N_R\|A^{\theta'/4}w_t(t)\|^2_2\le\sum^4_{j=1}J_j,
\label{eq:4.14}
\end{equation}
for some constant $N_R>0$, where we use the global estimate (\ref{eq:3.17}) and $N(\tau)>0$. Here, the expressions for $J_j,j=1,2,3,4,$ are the same ones given in (\ref{eq:3.25})-(\ref{eq:3.28}). Using the relation
\begin{equation*}
\frac{1}{2}\frac{d}{dt}\Bigl[M\Bigl(\|A^{1/4}u(t)\|^2_2\Bigr)\|A^{1/4}w(t)\|^2_2\Bigr]=M'\Bigl(\|A^{1/4}u(t)\|^2_2\Bigr)(A^{1/2}u(t),u_t(t))\|A^{1/4}w(t)\|^2_2-J_1,
\end{equation*}
we can rewrite (\ref{eq:4.14}) as follows:
\begin{equation}
\frac{d}{dt}F_{\alpha, \theta}(t)+N_R\|A^{\theta'/4}w_t(t)\|^2_2\le J_2+J_3+J_4+J_5,
\label{eq:4.15}
\end{equation}
where we set
\begin{equation}
F_{\alpha, \theta}(t):=\frac{1}{2}\Bigl\{\|A^{1/2}w(t)\|^2_2+\|w_t(t)\|^2_2+\alpha\|A^{\theta/4}w_t(t)\|^2_2+M\Bigl(\|A^{1/4}u(t)\|^2_2\Bigr)\|A^{1/4}w(t)\|^2_2\Bigr\}
\label{eq:4.16}
\end{equation}
and
\begin{equation}
J_5:=M'\Bigl(\|A^{1/4}u(t)\|^2_2\Bigr)(A^{1/2}u(t),u_t(t))\|A^{1/4}w(t)\|^2_2.
\label{eq:4.17}
\end{equation}
Next we shall estimate the right-hand side of (\ref{eq:4.15}). To simplify the notation we use $C_R$ to denote various positive constants depending on $R>0$, but not on time. Firstly, since $M,N\in C^1([0,\infty))$, then the estimate (\ref{eq:3.17}) implies
\[ \max_{\tau\in [0,C_R]}\{ |M(\tau)|,|M'(\tau)|,|N(\tau)|,|N'(\tau)|\}<\infty. \]
Likewise the proof of the a priori estimate, applying the mean value theorem, Young's inequality with $\epsilon>0$, the estimate (\ref{eq:3.17}) and the embedding $D(A^{1/2})\hookrightarrow D(A^{1/4})\hookrightarrow L^2(\Omega)$, we obtain
\begin{eqnarray*}
|J_2|&\le&C_R\big[\|A^{1/4}u(t)\|_2+\|A^{1/4}v(t)\|_2\bigr]\|A^{1/4}w(t)\|_2\|A^{\theta'/4}v_t(t)\|_2\|A^{\theta'/4}w_t(t)\|_2\\[1.0mm]
\ \ \ \ &\le&C_R\|A^{1/2}w(t)\|_2\|A^{\theta'/4}v_t(t)\|_2\|A^{\theta'/4}w_t(t)\|_2\\
\ \ \ \ &\le&\epsilon\|A^{\theta'/4}w_t(t)\|^2_2+C_{R,\epsilon}\|A^{\theta'/4}v_t(t)\|^2_2\|A^{1/2}w(t)\|^2_2.
\end{eqnarray*} 
for any $\epsilon>0$ and
\begin{eqnarray*}
|J_3|&\le&C_R\big[\|A^{1/4}u(t)\|_2+\|A^{1/4}v(t)\|_2\bigr]\|A^{1/4}w(t)\|_2\|A^{1/2}v(t)\|_2\|w_t(t)\|_2\\
\ \ \ \  &\le&C_R\|A^{1/4}w(t)\|_2\|A^{\theta'/4}w_t(t)\|_2\\[1.0mm]
\ \ \ \  &\le&\epsilon\|A^{\theta'/4}w_t(t)\|^2_2+C_{R,\epsilon}\|A^{1/4}w(t)\|^2_2,
\end{eqnarray*}
Using the same conditions in the proof of the uniqueness, we have
\begin{eqnarray*}
|J_4|&\le&\sigma_0\bigl(|\Omega|^{\frac{p}{2(p+2)}}+\|u(t)\|^{p/2}_{p+2}+\|v(t)\|^{p/2}_{p+2}\bigr)\|w(t)\|_{p+2}\|w_t(t)\|_2\\
\ \ \ \ &\le&C_R\|w(t)\|_{p+2}\|A^{\theta'/4}w_t(t)\|_2\\
\ \ \ \ &\le&\epsilon\|A^{\theta'/4}w_t(t)\|^2_2+C_{R,\epsilon}\|w(t)\|^2_{p+2}.
\end{eqnarray*}
In addition, from the global estimate (\ref{eq:3.17}), we get immediately
\[ |J_5|\le C_R\|A^{1/4}w(t)\|^2_2. \]
Substituting these four estimates in (\ref{eq:4.15}) and choosing $\epsilon>0$ small enough, we see that there exists constants $N_R,C_R>0$ such that
\begin{eqnarray}
\begin{split}
\frac{d}{dt}F_{\alpha, \theta}(t)\le&-N_R\|A^{\theta'/4}w_t(t)\|^2_2+C_R\|A^{1/4}v_t(t)\|^2_2\|A^{1/2}w(t)\|^2_2\\
                                 &+C_R\Bigl(\|A^{1/4}w(t)\|^2_2+\|w(t)\|^2_{p+2}\Bigr),\ \ \ \ \ \ t>0.
\label{eq:4.18}                                 
\end{split}
\end{eqnarray}

Now we define the functional
\[ F_{\alpha, \theta, \eta}(t):=F_{\alpha, \theta}(t)+\eta\Phi_{\alpha, \theta}(t)\ \ \text{with}\ \ \Phi_{\alpha, \theta}(t):=(w_t(t),w(t))+\alpha(A^{\theta/4}w_t(t),A^{\theta/4}w(t)) \]
where $\eta>0$ will be fixed later. We first show that there exists s constant $C_R>0$ such that
\begin{equation}
\frac{d}{dt}\Phi_{\alpha, \theta}(t)\le-F_{\alpha,\theta}(t)+C_R\|A^{\theta'/4}w_t(t)\|^2_2+C_R\Bigl(\|A^{1/4}w(t)\|^2_2+\|w(t)\|^2_{p+2}\Bigr),\ \ t>0.
\label{eq:4.19}
\end{equation}
Indeed, taking derivative of $\Phi_{\alpha, \theta}(t)$, using the weak formulation for $w$, adding and subtracting $F_{\alpha,\theta}(t)$ in the resulting expression and neglecting unnecessary terms, we arrive at
\begin{equation}
\frac{d}{dt}\Phi_{\alpha,\theta}(t)\le-F_{\alpha,\theta}(t)+\frac{3}{2}\Bigl(\|w_t(t)\|^2_2+\alpha\|A^{\theta/4}w_t(t)\|^2_2\Bigr)-\frac{1}{2}\|A^{1/2}w(t)\|^2_2+\sum^4_{j=1}K_j,
\label{eq:4.20}
\end{equation}
where
\begin{eqnarray*}
K_1&=&-N\Bigl(\|A^{1/4}u(t)\|^2_2\Bigr)(A^{\theta'/4}w(t),A^{\theta'/4}w_t(t)),\\[1.0mm]\label{eq:4.21}
K_2&=&-\Bigl\{M\Bigl(\|A^{1/4}u(t)\|^2_2\Bigr)-M\Bigl(\|A^{1/4}v(t)\|^2_2\Bigr)\Bigr\}(A^{1/4}v(t),A^{1/4}w(t)),\\[1.0mm]\label{eq:4.22}
K_3&=&-\Bigl\{N\Bigl(\|A^{1/4}u(t)\|^2_2\Bigr)-N\Bigl(\|A^{1/4}v(t)\|^2_2\Bigr)\Bigr\}(v_t(t),A^{\theta'/2}w(t)),\\[1.0mm]\label{eq:4.23}
K_4&=&-(f(u(t))-f(v(t)),w(t)). \label{eq:4.24}
\end{eqnarray*}
First we estimate $K_1$. From the estimate (\ref{eq:3.17}) and the uniform boundedness of $N$, Young's inequality with $\epsilon>0$ and the embedding $D(A^{1/2})\hookrightarrow D(A^{1/4})$ we get
\begin{eqnarray*}
|K_1|&\le&C_R\|A^{\theta'/4}w_t(t)\|_2\|A^{1/2}w(t)\|_2\\[1.0mm]
\ \ \ \ &\le&\epsilon\|A^{1/2}w(t)\|^2_2+C_{R,\epsilon}\|A^{\theta'/4}w_t(t)\|^2_2.
\end{eqnarray*}
Proceeding almost the same way as $J_1,J_2,$ and $J_4$, but replacing the function $w_t$ by $w$, we derive
\begin{eqnarray*}
|K_2|&\le&C_R\|A^{1/4}w(t)\|^2_2,\\[1.0mm]
|K_3|&\le&\epsilon\|A^{1/2}w(t)\|^2_2+C_{R,\epsilon}\|A^{1/4}w(t)\|^2_2,\\[1.0mm]
|K_4|&\le&C_R\|w(t)\|^2_{p+2}
\end{eqnarray*}
for any $\epsilon>0$ and some positive constant $C_R$. Going back to (\ref{eq:4.20}) and inserting these four estimates, we result that (\ref{eq:4.19}) holds, after choosing $\epsilon>0$ small enough and using the embedding $D(A^{\theta'/4})\hookrightarrow L^2(\Omega)$.

Combining (\ref{eq:4.18}) with (\ref{eq:4.19}), noting that $\|A^{1/2}w(t)\|^2_2\le F_{\alpha,\theta}(t)$, and taking $\eta>0$ small enough such that $\eta<\frac{N_R}{C_R}$, there exists a constant $C_R>0$ such that
\begin{equation}
\frac{d}{dt}F_{\alpha,\theta,\eta}(t)\le-\eta F_{\alpha,\theta}(t)+C_R\|A^{\theta'/4}v_t(t)\|^2_2F_{\alpha,\theta}(t)+C_R\Bigl(\|A^{1/4}w(t)\|^2_2+\|w(t)\|^2_{p+2}\Bigr),
\label{eq:4.25}
\end{equation}
for all $t>0$ and $\eta<\frac{N_R}{C_R}$.

On the other hand, by taking $C_1:=\max\{1,1/\lambda_1\}>0$, it is readily to see that
\begin{equation}
|F_{\alpha,\theta,\eta}(t)-F_{\alpha,\theta}(t)|\le\eta C_1F_{\alpha,\theta}(t),\ \ \forall t\ge0,\ \forall\eta>0.
\label{eq:4.26}
\end{equation}
Then taking and fixing $\eta>0$ such that $\eta\le\min\{\frac{1}{2C_1},\frac{N_R}{C_R}\}$, (\ref{eq:4.26}) implies that
\begin{equation}
\frac{1}{2}F_{\alpha,\theta}(t)\le F_{\alpha,\theta,\eta}(t)\le\frac{3}{2}F_{\alpha,\theta}(t),\ \ \forall t\ge0.
\label{eq:4.27}
\end{equation}
Compounding (\ref{eq:4.25}) with (\ref{eq:4.27}) we get
\begin{equation}
\frac{d}{dt}F_{\alpha,\theta,\eta}(t)\ \le\ \phi_{\eta}(t)F_{\alpha,\theta,\eta}(t)+C_RW(t),\ \ t>0,
\label{eq:4.28}
\end{equation}
where we define 
\[ \phi_{\eta}(t):=-\frac{\eta}{3}+C_R\|A^{\theta'/4}v_t(t)\|^2_2\ \ \ \text{and}\ \ \ W(t):=\|A^{1/4}w(t)\|^2_2+\|w(t)\|^2_{p+2}. \]
Applying Gronwall's inequality, we deduce that
\begin{equation}
F_{\alpha,\theta,\eta}(t)\le e^{\int^t_0\phi_{\eta}(s)ds}\Bigl(F_{\alpha,\theta,\eta}(0)+C_R\int^t_0e^{\int^s_0\phi_{\eta}(\xi)d\xi}W(s)ds\Bigr).
\label{eq:4.29}
\end{equation}
Moreover, from the estimate (\ref{eq:3.17}) we also have
\[ \int^t_0\phi_{\eta}(s)ds=-\frac{\eta}{3}t+C_R\int^t_0\|A^{\theta'/4}v_t(t)\|^2_2ds\le-\frac{\eta}{3}t+\widetilde{C_R}, \]
for some positive constant $\tilde{C_R}$. Thus (\ref{eq:4.29}) leads to
\begin{equation}
F_{\alpha,\theta,\eta}(t)\le C_RF_{\alpha,\theta,\eta}(0)e^{-\frac{\eta}{3}t}+C_R\int^t_0e^{-\frac{\eta}{3}(t-s)}W(s)ds,
\label{eq:4.30}
\end{equation}
for all $t>0$, and some constant $C_R>0$. Lastly, from (\ref{eq:4.16}) we note that there exists a constant $C_R>0$ such that 
\begin{equation}
\|z_1(t)-z_2(t)\|^2_{\mathcal{H}_{\alpha,\theta}}\le F_{\alpha,\theta}(t)\le(1+C_R)\|z_1(t)-z_2(t)\|^2_{\mathcal{H}_{\alpha,\theta}},\ \ \forall t\ge0.
\label{eq:4.31}
\end{equation}

Therefore, combining (\ref{eq:4.27}) with (\ref{eq:4.30})-(\ref{eq:4.31}), we get the conclusion that the stability inequality (\ref{eq:4.13}) holds true for some constants $C_R>0$ and $\delta_R>0$. The proof of Proposition \ref{prop:st} is complete. 
\end{proof}

Now we give the proof of the contractivity of the semiflow $S_{\alpha}$ utilizing the stability inequality (\ref{eq:4.13}): 

\ 

%\begin{proof}
%Let us get the aim in this proof specific and straight. Our aim in this proof is to seek the estimate of the difference of solutions as in the form:
We shall show the following estimate:
\begin{equation}
\|S_{\alpha,\theta}(T)z^1-S_{\alpha,\theta}(T)z^2\|_{\mathcal{H}_{\alpha,\theta}}\le C_Be^{-\delta T}\|z^1-z^2\|_{\mathcal{H}_{\alpha,\theta}}+\phi_T(z^1,z^2), 
\label{eq:4.32}
\end{equation}
where $\phi_T$ is a pseudometric on $\mathcal{H}_{\alpha,\theta}$ and it satisfies the following property for each $T>0$; that is, any bounded sequence in $\mathcal{H}_{\alpha,\theta}$ has a subsequence  which is a Cauchy sequence with respect to $\phi_T$.

If we get the above relation, the proof of the contractivity is complete. Indeed, from the definition of Kuratowski measure $\kappa$, there are open balls $B_1,...,B_{n(B,\kappa(B))}$ such that $B\subset B_1\cup...\cup B_{n(B,\kappa(B))}$ and
\begin{equation*}
\text{diam }B_i \le 2\kappa(B)+\epsilon,\qquad i=1,...,n(B,\kappa(B)),
\end{equation*}
for any $\epsilon>0$. Now fix $\epsilon>0$ and $t>0$. From the assumption of $\phi_T$, there is an $\epsilon'$-net for $B$; that is to say, there is an integer $m=m_{T,\epsilon'}\ge1$ and a collection of balls $D^T_1(z_1),...,D^T_{m_{T,\epsilon'}}(z_{m_{T,\epsilon'}})$ such that $B\subset D^T_1(z_1)\cup...\cup D^T_{m_{T,\epsilon'}}(z_{m_{T,\epsilon'}})$ and
\begin{equation*}
\phi_T(z,z_j) \le \epsilon',\qquad \text{for all}\ z\in D^T_j,\ j=1,...,m_{T,\epsilon'}.
\end{equation*}
As a result one has
\begin{equation*}
B\subset \bigcup^k_{i=1}\bigcup^{m_{T,\epsilon'}}_{j=1}(B_i\cap D^T_j),\quad(k=n(B,\kappa(B)))
\end{equation*}
and 
\begin{equation}
S_{\alpha,\theta}(T)B\subset\bigcup^k_{i=1}\bigcup^{m_{T,\epsilon'}}_{j=1}S_{\alpha,\theta}(T)(B_i\cap D^T_j).
\label{eq:cover}
\end{equation}
If $z^1,z^2\in B_i\cap D^T_j$, then (\ref{eq:4.32}) implies that
\begin{equation}
\|S_{\alpha,\theta}(T)z^1-S_{\alpha,\theta}(T)z^2\|_{\mathcal{H}_{\alpha}}\le C_Be^{-\delta T}(2\kappa(B)+\epsilon)+2\epsilon'.
\label{eq:4.33}
\end{equation}
%(\ref{eq:4.33})
This implies that
\begin{equation*}
\text{diam }S_{\alpha,\theta}(T)(B_i\cap D^T_j)\le 2C_Be^{-\delta T}\kappa(B)+C_B\epsilon+2\epsilon'.
\end{equation*}
Using the property iii of $\kappa$-measure, we have
\begin{equation*}
\kappa(S_{\alpha,\theta}(T)B)\le\max_{i,j}\{\kappa\bigl(S_{\alpha,\theta}(T)(B_i\cap D^T_j)\bigr)\}\le 2C_Be^{-\delta T}\kappa(B)+C_B\epsilon+2\epsilon'.
\end{equation*}
Since this inequality is valid for every $\epsilon,\epsilon'>0$, we can send $\epsilon,\epsilon\to0$ to obtain $\kappa(S_{\alpha,\theta}(T)B)\le C_Be^{-\delta T}\kappa(B)$, for all $T>0$. This inequality draws the conclusion of the contractivity of $\kappa$-measure.

Our aim, from now on, is to estimate the right-hand side of (\ref{eq:4.13}) so as to satisfies (\ref{eq:4.32}). From the interpolation inequality and the estimate (\ref{eq:3.17}),
\[ \|A^{1/4}w(t)\|_2\le\|A^{1/2}w(t)\|^{1/2}_2\|w(t)\|^{1/2}_2\le C_B\|u^1(t)-u^2(t)\|^{1/2}_2. \]
From Nirenberg-Gagliardo's inequality and the estimate (\ref{eq:3.17}) we have
\[ \|w(t)\|_{p+2}\le C_{\vartheta}\|A^{1/2}w(t)\|^{\vartheta}_2\|w(t)\|^{1-\vartheta}_2\le C_B\|u^1(t)-u^2(t)\|^{1-\vartheta}, \]
where $\vartheta=\frac{n}{4}\Bigl(1-\frac{2}{p+2}\Bigr)$. Taking $\Theta=\min\{1/2,1-\vartheta\}$, and noting that $\|u^i(t)\|_2\ (i=1,2)$ is uniformly bounded, there exists a constant $C_B>0$ such that
\begin{equation}
\|A^{1/4}w(t)\|^2_2+\|w(t)\|^2_{p+2}\le C_B\|u^1(t)-u^2(t)\|^{2\Theta}_2.
\label{eq:4.34}
\end{equation}
Using (\ref{eq:4.34}) we can rewrite (\ref{eq:4.13}) as follows:
\[ \|S_{\alpha,\theta}(T)z^1-S_{\alpha,\theta}(T)z^2\|_{\mathcal{H}_{\alpha,\theta}}\le C_Be^{-\delta T}\|z^1-z^2\|_{\mathcal{H}_{\alpha,\theta}}+\phi_T(z^1,z^2), \]
where
\[ \phi_T(z^1,z^2)=C_B\sup_{t\in[0,T]}\|u^1(t)-u^2(t)\|^{\Theta}_2. \]
Then we can find that $\phi_T$ satisfies the desire property. Indeed, it is clear that $\phi_T$ is a pseudometric. Also, given a sequence of initial data $z^i\in B$, we write $S_{\alpha,\theta}(t)z^i=(u^i(t),u^i_t(t))$. Since $B$ is bounded and the estimate (\ref{eq:3.17}), the sequence $(u^i)_{i\in\mathbb{N}}=\{u^i(\tau)\ |\ t\in[0,T]\ i\in\mathbb{N}\}$ is uniformly bounded on $\mathcal{H}_{\alpha}$ and equicontinuous. Compound these properties with the compactness of the embedding $D(A^{1/2})\hookrightarrow L^2(\Omega)$, there exists a subsequence still denoted by $(u^i)$ such that
\[ (u^i)\quad\text{converges strongly in}\quad C([0,T],L^2(\Omega)),\quad T>0. \]
Therefore this completes the proof.
%\end{proof}

We conclude this section with comment on the structure of the global attractor and minimal attractor. 
It is easy to check that the energy $E_{\alpha,\theta}$ is a Lyapunov function for the dynamical system $(\mathcal{H}_{\alpha,\theta},S_{\alpha,\theta}(t))$ and we can immediately see that $(\mathcal{H}_{\alpha,\theta},S_{\alpha,\theta}(t))$ is gradient. Thus we are able to get the conclusions on the structures of the global attractor and minimal attractor from the Proposition \ref{prop:stru}.

\subsection{Exponential attractor}

Lastly, we consider the existence of exponential attractors in this subsection in brief. We note that the fractal dimension $\dim_fK$ of a compact set $K$ can be represented by the formula

\[ \dim_fK:=\limsup_{\epsilon\to0}\frac{\log n(K,\epsilon)}{\log1/\epsilon}, \]

\noindent
where, $n(K,\epsilon)$ is the minimal number of closed sets of the radius $\epsilon$ that cover $K$.\\

%In order to show the existence of a fractal exposential attractor, we rely the following theorem:
%Before getting to the main topic, we check the mapping $t\to S_{\alpha}(t)$ is H\"{o}lder continuous in $\mathcal{H}^{-s}_{\alpha}$ for every $z\in\mathcal{B}_{\alpha}$.

%The existence is guaranteed by the abstract results on the infinete-dimension dynamical system below:
Firstly, let us define the operator

\[ V_{\alpha,\theta}:=S_{\alpha,\theta}(T) : \mathcal{B}_{\alpha,\theta}\to\mathcal{B}_{\alpha,\theta}, \]
where $T>0$ is a fixed large number such that $Ce^{-\delta T} < \mu <1$ (where $C$ and $\delta$ is the number appearing in the stability inequality, $\mu<1$ is a fixed positive constant), and we consider the discrete dynamical systems $(\mathcal{B}_{\alpha,\theta},V_{\alpha,\theta})$, with the $\mathcal{H}_{\alpha,\theta}$ topology. Along the similar lines as the method in \cite{I. Chueshov and I. Lasiecka:1}, we can construct an discrete exponential attractor below (for the details we refer to \cite{I. Chueshov} and \cite{I. Chueshov and I. Lasiecka:2}):

\begin{equation}
\mathcal{A}^*_{\alpha,\theta}:=\Bigl(\bigcap_{m\ge0}V^m_{\alpha,\theta}\mathcal{B}_{\alpha,\theta}\Bigr)\bigcup\ \Bigl(\bigcup_{k,l\ge0}V^k_{\alpha,\theta}E^l_{\alpha,\theta}\Bigr), 
\label{eq:exp1}
\end{equation}

\noindent
where $E^l_{\alpha,\theta}$ is the set of all points of the center of covering balls of $V^l_{\alpha,\theta}\mathcal{B}_{\alpha,\theta}$:

\begin{equation*}
E^l_{\alpha,\theta}=\ \bigr\{ V^l_{\alpha,\theta}z^l_{i,j} : 1\le i\le n(\mathcal{B}_{\alpha,\theta},\kappa(\mathcal{B}_{\alpha,\theta})),\ 1\le j \le m_{l,\nu},\ z^l_{i,j}\in B_i\cap D^{l}_j \bigl\}, %\quad \text{diam}V^l_{\alpha}\bigr(B_i\cap D^{l}_j\bigl) \le 2\kappa(\mathcal{B})q^l,
\end{equation*} 

\begin{equation*}
\text{diam}V^l_{\alpha,\theta}\bigr(B_i\cap D^{l}_j\bigl) \le 2C_{\mathcal{B}_{\alpha,\theta}}(\kappa(\mathcal{B}_{\alpha,\theta})+\epsilon)q^l \quad \text{and} \quad q:=\mu+\nu<1, 
\end{equation*}

\noindent
see (\ref{eq:cover}) (we can get this inequality by choosing $\epsilon=2\epsilon$ and $\epsilon'=2(\kappa(\mathcal{B}_{\alpha,\theta})+\epsilon)\nu^l$ in (\ref{eq:4.33})). One can see that 

\[ \mathcal{A}_{\exp, \alpha,\theta}:=\bigcup_{0\le t\le T}S_{\alpha,\theta}(t)\mathcal{A}^*_{\alpha,\theta}  \] 
is a compact positively invariant set with respect to $S_{\alpha,\theta}(t)$. Moreover, it follows from the lemma below that $\dim_f{\mathcal{A}_{\exp,\alpha,\theta}}\le c\{1+\dim_f{\mathcal{A}^*_{\alpha,\theta}}\}< \infty$ (see \cite{I. Chueshov and I. Lasiecka:2}). We also have 

\[ h(S_{\alpha,\theta}(t)\mathcal{B}_{\alpha,\theta},\mathcal{A}_{\exp,\alpha,\theta})\le Ce^{-\gamma t},\quad t\ge0 \]
for some $\gamma=\gamma_{\mathcal{B}_{\alpha,\theta}}>0$. Thus $\mathcal{A}_{\exp,\alpha,\theta}$ is an exponential attractor.

\begin{lem}
Let the assumption of Theorem \ref{thm:3.1} be in force. Then the mapping $t \mapsto S_{\alpha,\theta}(t)z$ is H\"{o}lder continuous in  $\mathcal{H}^{-s}_{\alpha,\theta}$ for every $z\in \mathcal{B}_{\alpha,\theta}$, where $0<s\le1$.
\end{lem}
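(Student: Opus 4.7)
Write $S_{\alpha,\theta}(t)z=(u(t),u_t(t))$ for $z\in\mathcal{B}_{\alpha,\theta}$. By Proposition~\ref{thm:dis} and Remark~\ref{rem:1}, the whole trajectory satisfies $\|A^{1/2}u(\sigma)\|_2+\|u_t(\sigma)\|_2+\sqrt{\alpha}\|A^{\theta/4}u_t(\sigma)\|_2\le C$ uniformly for $\sigma\ge 0$ and $\alpha\in[0,1]$. The target norm decomposes as
\[
\|S_{\alpha,\theta}(t)z-S_{\alpha,\theta}(\tau)z\|_{\mathcal{H}^{-s}_{\alpha,\theta}}^2=\|A^{(1-s)/2}(u(t)-u(\tau))\|_2^2+\|u_t(t)-u_t(\tau)\|_{H^{-s}_{\alpha,\theta}}^2.
\]
For the first component, writing $u(t)-u(\tau)=\int_\tau^t u_t(\sigma)\,d\sigma$ yields the Lipschitz bound $\|u(t)-u(\tau)\|_2\le C|t-\tau|$; combined with $\|A^{1/2}(u(t)-u(\tau))\|_2\le 2C$ and interpolation in the scale $D(A^r)$, this gives $\|A^{(1-s)/2}(u(t)-u(\tau))\|_2\le C|t-\tau|^s$.

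For the second component I need an $\alpha$-uniform negative-norm bound on $u_{tt}$. Rewriting~(\ref{eq:2.1}) as $(1+\alpha A^{\theta/2})u_{tt}=g$ with
\[
g:=-Au-M(\|A^{1/4}u\|_2^2)A^{1/2}u-N(\|A^{1/4}u\|_2^2)A^{\theta'/2}u_t-f(u)+h,
\]
the a priori bounds, the growth assumption~(\ref{eq:3.2}), and the embedding $D(A^{1/2})\hookrightarrow L^{p+2}$ show that $\|g\|_{D(A^{-1/2})}\le C$ uniformly in $\alpha$ (each term is handled by a non-positive power of $A$ since $\theta,\theta'\le 1$; e.g.\ $\|A^{-1/2}A^{\theta'/2}u_t\|_2\le\lambda_1^{(\theta'-1)/2}\|u_t\|_2$ and $\|f(u)\|_2\le C(1+\|u\|_{p+2}^{(p+2)/2})$). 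Pairing the equation with $A^{-1}u_{tt}$ and applying Young's inequality yields
\[
\|A^{-1/2}u_{tt}\|_2^2+\alpha\|A^{\theta/4-1/2}u_{tt}\|_2^2\le C.
\]
Integrating over $[\tau,t]$ produces the Lipschitz-in-time bounds $\|A^{-1/2}(u_t(t)-u_t(\tau))\|_2\le C|t-\tau|$ and $\sqrt{\alpha}\|A^{\theta/4-1/2}(u_t(t)-u_t(\tau))\|_2\le C|t-\tau|$. Interpolating these against the trivial controls $\|u_t(t)-u_t(\tau)\|_2\le 2C$ and $\sqrt{\alpha}\|A^{\theta/4}(u_t(t)-u_t(\tau))\|_2\le 2C$, via the scale-preserving identities $-s/2=s(-1/2)+(1-s)\cdot 0$ and $\theta/4-s/2=s(\theta/4-1/2)+(1-s)(\theta/4)$, I obtain
\[
\|A^{-s/2}(u_t(t)-u_t(\tau))\|_2+\sqrt{\alpha}\|A^{\theta/4-s/2}(u_t(t)-u_t(\tau))\|_2\le C|t-\tau|^s,
\]
whence Hölder continuity with exponent $s$ follows by combining both components.

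The key difficulty is $\alpha$-uniformity: estimating $u_{tt}$ by inverting $(1+\alpha A^{\theta/2})$ directly produces constants that degenerate as $\alpha\to 0^+$, which would be fatal for the continuity claim of Theorem~\ref{thm:4.2}. Testing against $A^{-1}u_{tt}$ couples the two summands of the $H^{-s}_{\alpha,\theta}$-norm in precisely the way needed to absorb the $\alpha$-weight uniformly, and the two matching interpolation identities then transport the Lipschitz control from the endpoint $s=1$ to every $s\in(0,1]$ with a single Hölder exponent and a single constant independent of $\alpha\in[0,1]$.
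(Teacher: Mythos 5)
Your proof is correct, and its skeleton is the same as the paper's: establish an endpoint estimate in $\mathcal{H}^{-1}_{\alpha,\theta}$ by bounding the time derivative $(u_t,u_{tt})$ of the trajectory in that space, integrate in time, and then interpolate against the uniform bound in $\mathcal{H}_{\alpha,\theta}$ to cover $0<s<1$. The differences are in execution. The paper simply asserts $u_{tt}\in L^{\infty}(0,T;H^{-1}_{\alpha,\theta})$ by pointing back to (\ref{eq:3.6}) and the approximation scheme (\ref{eq:3.9}), whereas you derive this bound directly from the equation by writing $(1+\alpha A^{\theta/2})u_{tt}=g$, checking $\|A^{-1/2}g\|_2\le C$ term by term, and pairing with $A^{-1}u_{tt}$; this is a genuinely useful addition, since it makes the $\alpha$-uniformity of the constant explicit, which the later perturbation arguments implicitly rely on but the paper's proof does not spell out. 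You also interpolate component by component and use the $L^\infty$-in-time bound on $u_{tt}$ to get Lipschitz continuity at the endpoint, which yields the H\"older exponent $s$, whereas the paper applies Cauchy--Schwarz to the $L^2$-in-time integral and obtains only $|t_2-t_1|^{s/2}$; either exponent suffices for the lemma and for the finite-dimensionality argument it feeds into. The only caveat is that the pairing with $A^{-1}u_{tt}$ is a formal computation for weak solutions and should strictly be performed on the Galerkin approximations or on strong solutions and passed to the limit, but this is routine.
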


\begin{proof}
We first prove the assertion for the case $s=1$. For $z_0=(u_0,u_1)\in\mathcal{B}_{\alpha,\theta}$ we can see the from (\ref{eq:3.6}) and (\ref{eq:3.9}) that

\[ (u_t,u_{tt})\in L^{\infty}(0,T;L(\Omega)\times L^{\infty}(0,T;H^{-1}_{\alpha,\theta}),\ \ \forall T>0. \]
Hence we have

\begin{eqnarray*}
\|S_{\alpha,\theta}(t_2)z_0-S_{\alpha,\theta}(t_1)z_0\|_{\mathcal{H}^{-1}_{\alpha,\theta}}&\le&\int^{t_2}_{t_1}\Bigl\|\frac{d}{ds}S_{\alpha,\theta}(s)z_0\Bigr\|_{\mathcal{H}^{-1}_{\alpha,\theta}}ds\\
\ \ \ \ &\le&\Bigl(\int^T_0\|(u_t(s),u_{tt}(s)\|^2_{\mathcal{H}^{-1}_{\alpha,\theta}}ds\Bigr)^{1/2}|t_2-t_1|^{1/2}\\
\ \ \ \ &\le&C_{\mathcal{B}_{\alpha,\theta},T}|t_2-t_1|^{1/2},
\end{eqnarray*}
for every $t_1, t_2\in[0,T],$ that is, $t\mapsto S_{\alpha,\theta}(t)z_0$ is H\"{o}lder continuous in $\mathcal{H}^{-1}_{\alpha,\theta}$. 
Now we consider the case $0<s<1$. Applying the interpolation theorem in each component of $\mathcal{H}^{-s}_{\alpha,\theta}$, using the uniform boundedness of the solutions and the H\"{o}lder continuity in $\mathcal{H}^{-1}_{\alpha,\theta}$, we get

\begin{eqnarray*}
\|S_{\alpha,\theta}(t_2)z_0-S_{\alpha,\theta}(t_1)z_0\|_{\mathcal{H}^{-s}_{\alpha,\theta}}&\le&C_s\|S_{\alpha,\theta}(t_2)z_0-S_{\alpha,\theta}(t_1)z_0\|^s_{\mathcal{H}^{-1}_{\alpha,\theta}}\|S_{\alpha,\theta}(t_2)z_0-S_{\alpha,\theta}(t_1)z_0\|^{1-s}_{\mathcal{H}_{\alpha,\theta}}\\
\ \ \ \ &\le&C_{s,T}\|S_{\alpha,\theta}(t_2)z_0-S_{\alpha,\theta}(t_1)z_0\|^s_{\mathcal{H}^{-1}_{\alpha,\theta}}\\
\ \ \ \ &\le&C_{s,\mathcal{B}_{\alpha,\theta},T}|t_2-t_1|^{s/2},
\end{eqnarray*}
for every $t_1,t_2\in[0,T].$ Thus $t\mapsto S_{\alpha,\theta}(t)z_0$ is H\"{o}lder continuous in $\mathcal{H}^{-s}_{\alpha,\theta}$. 

\end{proof}

\section{Stability of attractors with respect to the rotational inertia}
%This chapter is main in this paper. 
\subsection{Upper semicontinuity of the global attractors $\mathcal{A}_{\alpha}$ when $\alpha\to0$}
In order to show the upper semicontinuity of attractors, we use the following criterion:

\begin{prop}(See \cite{I. Chueshov and I. Lasiecka:2})
Assume that a dynamical system $(X_{\alpha},S_{\alpha}(t))$ possesses a compact global attractor $A_{\alpha}$ for every $\alpha\in[0,1]$. Assume that the following conditions hold:

\begin{itemize}
\item There exists a compact set $K \subset X$ such that $A_{\alpha} \subset K$ for all $\alpha\in[0,1]$.
\item If $\alpha_k \to 0,\ x_k\in A_{\alpha_k}$ and $x_k\to x_0$, then $S_{\alpha_k}(t_0)x_k \to S(t_0)x_0$ for some $t_0>0$.
\end{itemize}
Then the family of attractors $A_{\alpha}$ is upper semicontinuous at the point $\alpha=0$, that is,
\[ h(\mathcal{A}_{\alpha},\mathcal{A})\equiv \sup_{y\in\mathcal{A}_{\alpha}}\inf_{z\in\mathcal{A}} \|y-z\|_X \rightarrow 0 \]
as $\alpha \rightarrow 0+$.
\label{prop:upp}
\end{prop}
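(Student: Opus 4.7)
The plan is to argue by contradiction. Suppose the conclusion fails: there exist $\varepsilon_0>0$, a sequence $\alpha_k\to 0$, and points $y_k\in A_{\alpha_k}$ with $\mathrm{dist}_X(y_k,\mathcal{A})\ge\varepsilon_0$ for every $k$. By the first hypothesis each $y_k$ lies in the compact set $K$, so after passing to a subsequence we may assume $y_k\to y^*\in K$; then $\mathrm{dist}_X(y^*,\mathcal{A})\ge\varepsilon_0$, so $y^*\notin\mathcal{A}$. The task reduces to deriving the contradictory statement $y^*\in\mathcal{A}$.

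The route to $y^*\in\mathcal{A}$ is to produce a bounded complete trajectory of $(X,S(t))$ passing through $y^*$, since it is standard that the compact global attractor coincides with the union of all bounded complete trajectories. The construction exploits the full invariance $S_{\alpha_k}(t)A_{\alpha_k}=A_{\alpha_k}$: for every integer $n\ge 0$ and every $k$, choose $x_k^n\in A_{\alpha_k}\subset K$ with $S_{\alpha_k}(nt_0)x_k^n=y_k$, so that $x_k^0=y_k$ and $S_{\alpha_k}(t_0)x_k^{n+1}=x_k^n$ for every $n\ge 0$. Compactness of $K$ together with a Cantor diagonal extraction provides a single subsequence, still indexed by $k$, along which $x_k^n\to x^n\in K$ as $k\to\infty$ for every $n\ge 0$, with $x^0=y^*$.

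The second hypothesis, applied to the pair $(x_k^{n+1},x^{n+1})$ for each fixed $n$, yields $S_{\alpha_k}(t_0)x_k^{n+1}\to S(t_0)x^{n+1}$; but the left side equals $x_k^n$, which converges to $x^n$, so $S(t_0)x^{n+1}=x^n$ for every $n\ge 0$. I then interpolate this discrete backward orbit by the limiting semigroup, setting $u(t):=S(t+(n+1)t_0)x^{n+1}$ for $t\in[-(n+1)t_0,-nt_0]$ and $u(t):=S(t)y^*$ for $t\ge 0$; the identity $S(t_0)x^{n+1}=x^n$ makes $u$ continuous at each join, and a short semigroup computation shows $S(\tau)u(s)=u(\tau+s)$ for all $\tau\ge 0$ and $s\in\mathbb{R}$. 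The forward branch is bounded because the point $y^*$ is attracted by $\mathcal{A}$, while the backward branch lies in $\bigcup_{\tau\in[0,t_0]}S(\tau)K$, which is bounded by continuity of $S(\tau)$ and compactness of $K$. Hence $u$ is a bounded complete trajectory through $y^*$, forcing $y^*\in\mathcal{A}$ and producing the sought contradiction.

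The main technical point to handle carefully is the diagonal extraction combined with the iterative use of the perturbation hypothesis: a single subsequence must deliver $x_k^n\to x^n$ simultaneously for every $n$, so that each identity $S(t_0)x^{n+1}=x^n$ is obtained along the same index set. After that bookkeeping, the argument rests on two classical facts --- the characterization of the compact global attractor as the union of bounded complete trajectories, and the continuity of $S(\tau)$ on compact time intervals applied to the compact set $K$ --- neither of which requires further labor.
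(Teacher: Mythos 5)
The paper does not prove this proposition at all: it is quoted from Chueshov--Lasiecka and used as a black box, so there is no in-paper argument to compare against. Your proof is essentially the standard one and is correct, with one presentational fix needed: choosing $x_k^n\in A_{\alpha_k}$ merely so that $S_{\alpha_k}(nt_0)x_k^n=y_k$ does not by itself give the chain relation $S_{\alpha_k}(t_0)x_k^{n+1}=x_k^n$ that you then rely on; you must construct the backward orbit recursively (set $x_k^0=y_k$ and, using $S_{\alpha_k}(t_0)A_{\alpha_k}=A_{\alpha_k}$, pick $x_k^{n+1}\in A_{\alpha_k}$ with $S_{\alpha_k}(t_0)x_k^{n+1}=x_k^n$), which then also yields $S_{\alpha_k}(nt_0)x_k^n=y_k$. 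With that reading, the diagonal extraction, the identities $S(t_0)x^{n+1}=x^n$, and the interpolation into a bounded complete trajectory through $y^*$ all go through, granting the usual standing assumptions that $(t,x)\mapsto S(t)x$ is continuous (needed for boundedness of $\bigcup_{\tau\in[0,t_0]}S(\tau)K$ and of the forward orbit on finite time intervals) and the classical characterization of $\mathcal{A}$ as the union of bounded complete trajectories. You could also shorten the ending considerably: once you have $x^n\in K$ with $S(nt_0)x^n=y^*$ for every $n$, the attraction property applied to the bounded set $K$ gives $\mathrm{dist}_X(y^*,\mathcal{A})\le h(S(nt_0)K,\mathcal{A})\to 0$, hence $y^*\in\mathcal{A}$ directly, with no need to build the complete trajectory or invoke its characterization of the attractor.
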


In the remainder of this section we check that the system $(\mathcal{H}_{\alpha,\theta},S_{\alpha,\theta}(t))$ satisfies the condition of this proposition.

\begin{lem}
Let $T>0$ be an arbitrary number. Assume that the hypotheses of Theorems \ref{thm:3.1} and \ref{thm:4.1} hold. Let $B\subset D(A)\times D(A^{1/2})$ be a bounded set. Let $z^{\alpha}(t),\ \alpha\in(0,1]$ be a solution to (\ref{eq:2.1}) ($\alpha>0$) with initial data $z^{\alpha}_0=(u^{\alpha}_0,u^{\alpha}_1)\in B$ and $z^0(t)$ solve (\ref{eq:2.1}) ($\alpha=0$) with $z^0_0=(u^0_0,u^0_1)\in B$. Assume that
\[ \|Au^{\alpha}_0\|^2_2+\|A^{1/2}u^{\alpha}_1\|^2_2,\  \|Au^0_0\|^2_2+\|A^{1/2}u^0_1\|^2_2\le R^2. \]
Then
\begin{equation}
\|z^{\alpha}(t)-z^0(t)\|_{\mathcal{H}}\le e^{Kt}\bigl[ \ \|z^{\alpha}_0-z^0_0\|_{\mathcal{H}}+\alpha K\bigr] \ ,\ \ \forall t\in[0,T],\ \ \forall\alpha\in(0,1] 
\label{eq:5.1}
\end{equation}
for some positive constant $K=K(\|z^{\alpha}_0\|, \|z^0_0\|, T)$. In particular, if $z^{\alpha}_{0} \rightarrow z^0_0$ in $\mathcal{H}$ as $\alpha \rightarrow 0+$, then
\begin{equation}
\lim_{\alpha \to 0+}\sup_{t\in[0,T]}\|z^{\alpha}(t)-z^0(t)\|_{\mathcal{H}}=0\ \ \text{for any}\ T>0. 
\label{eq:5.2}
\end{equation}
\label{prop:5.1}
\end{lem}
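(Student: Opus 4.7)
The plan is to compare the two solutions by deriving an evolution equation for the difference $w := u^{\alpha} - u^{0}$, testing it against $w_t$ in $L^2(\Omega)$, and absorbing all $\alpha$-dependent mismatch into a single source term. Subtracting (\ref{eq:2.1}) for $u^0$ from (\ref{eq:2.1}) for $u^\alpha$ and using the identity $(1+\alpha A^{\theta/2})u^{\alpha}_{tt} - u^{0}_{tt} = (1+\alpha A^{\theta/2})w_{tt} + \alpha A^{\theta/2}u^{0}_{tt}$, I obtain
\begin{equation*}
(1+\alpha A^{\theta/2})w_{tt} + Aw + \Delta_M + \Delta_N + \Delta_f = -\alpha A^{\theta/2}u^{0}_{tt},
\end{equation*}
where $\Delta_M, \Delta_N, \Delta_f$ denote the differences of the $M$-, $N$- and $f$-terms between $u^\alpha$ and $u^0$. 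Compared with the uniqueness proof in Section 3.3, the only new ingredient is the source $-\alpha A^{\theta/2}u^{0}_{tt}$, which quantifies the failure of $u^0$ to solve the $\alpha$-equation.

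Setting $G(t) := \|A^{1/2}w(t)\|_{2}^{2} + \|w_t(t)\|_{2}^{2} + \alpha\|A^{\theta/4}w_t(t)\|_{2}^{2}$ and pairing the equation with $w_t$ yields
\begin{equation*}
\tfrac{1}{2}\tfrac{d}{dt}G(t) + N_R\|A^{\theta'/4}w_t\|_{2}^{2} \le -\alpha\bigl(A^{\theta/2}u^{0}_{tt}, w_t\bigr) + \bigl|\bigl(\Delta_M+\Delta_N+\Delta_f,\, w_t\bigr)\bigr|,
\end{equation*}
with $N_R > 0$ coming from the uniform lower bound $N(\|A^{1/4}u^\alpha\|_2^2) \ge N_R$ furnished by (\ref{eq:3.17}). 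The nonlinear differences $\Delta_M, \Delta_N, \Delta_f$ are treated exactly as the terms $J_1,\ldots,J_4$ in the uniqueness argument: using (\ref{eq:3.17}), the mean value theorem, the Lipschitz estimate (\ref{eq:a}), Young's inequality and the continuous embeddings, they are dominated by $\phi(t)G(t) + \varepsilon\|A^{\theta'/4}w_t\|_2^2$ for some $\phi\in L^1(0,T)$ depending only on the strong-solution bounds of $u^\alpha$ and $u^0$.

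The main obstacle is the control of the error term $\alpha(A^{\theta/2}u^{0}_{tt}, w_t)$ uniformly in $\alpha$, and this is precisely where the hypothesis $(u_0^0, u_1^0)\in D(A)\times D(A^{1/2})$ enters in an essential way via the second energy estimate (\ref{eq:3.19}) rather than only the phase-space estimate (\ref{eq:3.17}). Applying (\ref{eq:3.19}) with $\alpha = 0$ to $u^0$ yields $\|u^{0}_{tt}(t)\|_2 \le C_R$ on $[0,T]$, while applying (\ref{eq:3.19}) to $u^\alpha$ gives $\|A^{1/2}u^{\alpha}_t(t)\|_2 \le C_R$ with $C_R$ independent of $\alpha \in [0,1]$. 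Combining these with the self-adjointness of $A^{\theta/2}$ and the bound $\|A^{\theta/2}w_t\|_2 \le \lambda_1^{(\theta-1)/2}\|A^{1/2}w_t\|_2$ (valid for $\theta \le 1$), I estimate
\begin{equation*}
\bigl|\alpha\bigl(A^{\theta/2}u^{0}_{tt},\, w_t\bigr)\bigr| = \alpha\bigl|\bigl(u^{0}_{tt},\, A^{\theta/2}w_t\bigr)\bigr| \le \alpha\, \|u^{0}_{tt}\|_2\,\|A^{\theta/2}w_t\|_2 \le C_R\,\alpha.
\end{equation*}

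Absorbing $\varepsilon\|A^{\theta'/4}w_t\|_2^2$ into the damping and applying Gronwall's inequality to $G$ then produces $G(t) \le e^{Kt}\bigl(G(0) + \alpha K\bigr)$ on $[0,T]$. Combined with the elementary bound $G(0) \le \|z_{0}^{\alpha} - z_{0}^{0}\|_{\mathcal{H}}^{2} + \alpha\,CR^{2}$ (from $D(A^{1/2}) \hookrightarrow D(A^{\theta/4})$ applied to $u_{1}^{\alpha} - u_{1}^{0}$), this delivers (\ref{eq:5.1}) after recalling $\|w\|_{\mathcal{H}}^2 \le G(t)$; the convergence (\ref{eq:5.2}) follows at once from $z_{0}^{\alpha} \to z_{0}^{0}$ in $\mathcal{H}$ together with Lebesgue's dominated convergence.
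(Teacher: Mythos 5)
Your argument is correct and follows essentially the same route as the paper's proof: subtract the two equations, test the difference against $w_t$, estimate the nonlinear mismatches exactly as in the uniqueness argument, isolate the rotational-inertia discrepancy as an $O(\alpha)$ source controlled by the second energy estimate (which is where the hypothesis $B\subset D(A)\times D(A^{1/2})$ enters), and close with Gronwall. The only cosmetic difference is that you attach $\alpha A^{\theta/2}$ to $w_{tt}$ and place the error on $u^0_{tt}$ with a pointwise-in-time bound, whereas the paper leaves it on $u^{\alpha}_{tt}$ and bounds the resulting time integrals; both are handled by the same regularity.
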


\begin{proof}
Set $w(t):=u^{\alpha}(t)-u^0(t)$. Then we have
\begin{eqnarray*}
\alpha A^{\theta/2}u^{\alpha}_{tt}+w_{tt}+Aw+N\Bigl(\|A^{1/4}u\|^2_2\Bigr)A^{\theta'/2}w_t=-\Bigl(N\Bigl(\|A^{1/4}u^0\|^2_2\Bigr)-N\Bigl(\|A^{1/4}u^{\alpha}\|^2_2\Bigr)\Bigr)A^{\theta'/2}u^{\alpha}_t\qquad\qquad\\
-M\Bigl(\|A^{1/4}u^2_2\Bigr)A^{1/2}w-\Bigl(M\Bigl(\|A^{1/4}u^0\|^2_2\Bigr)-M\Bigl(\|A^{1/4}u^{\alpha}\|^2_2\Bigr)\Bigr)A^{1/2}u^{\alpha}-(f(u^{\alpha})-f(u^0)),
\end{eqnarray*}
Multiplying this by $w_t$, we obtain
\begin{eqnarray*}
\frac{1}{2}\frac{d}{dt}\Bigl\{ \|A^{1/2}w(t)\|^2_2+\|w_t(t)\|^2_2\Bigr\}+\alpha(u^{\alpha}_{tt}(t),A^{\theta/2}w_t(t))+N\bigl(\|A^{1/4}u(t)\|^2_2\bigr)\|A^{\theta'4}w_t(t)\|^2_2\ =\ \sum^4_{j=1}J_j,
%\label{eq:3.24}
\end{eqnarray*}
where the expressions for $J_j$ $(j=1,2,3,4)$ are the same ones given in (\ref{eq:3.25})-(\ref{eq:3.28}). As in the proof of the uniqueness, we have
\begin{eqnarray}
\begin{split}
\frac{1}{2}\frac{d}{dt}\Bigl\{ \|A^{1/2}w(t)\|^2_2+\|w_t(t)\|^2_2\Bigr\}+\alpha(u^{\alpha}_{tt}(t),A^{\theta/2}w_t(t))+N_0\|A^{\theta'/4}w_t(t)\|^2_2\qquad\qquad\qquad\qquad\\
\le C(1+\|A^{\theta'/4}u^0_t(t)\|^2_2)\bigl(\|A^{1/2}w(t)\|^2_2+\|w_t(t)\|^2_2\bigr),
\label{eq:5.3}
\end{split}
\end{eqnarray}
where the $C$ and $N_0$ are constants. Integrating (\ref{eq:5.3}) on $[0,t]$ and using Gronwall's inequality we arrive at 
\begin{eqnarray*}
\begin{split}
\|A^{1/2}w(t)\|^2_2+\|w_t(t)\|^2_2+N_0\int^t_0\|A^{\theta'/4}w_t(s)\|^2_2ds\qquad\qquad\qquad\qquad\qquad\qquad\qquad\qquad\\
\le e^{C_T}\Bigl(\|A^{1/2}w(0)\|^2_2+\|w_t(0)\|^2_2+\alpha\int^t_0\|u^{\alpha}_{tt}(s)\|^2_2ds+\alpha\int^t_0\|A^{\theta/2}w_t(s)\|^2_2ds\Bigr).
%\label{eq:5,3}
\end{split}
\end{eqnarray*}
Since we take the initial data from the bounded set $B\subset D(A)\times D(A^{1/2})$ and from the second energy estimate, we find that (\ref{eq:5.1}) and (\ref{eq:5.2}) hold true.
\end{proof}

%\subsection{Regularity of trajectories from the attractors $\mathcal{A_{\alpha}}$ and upper semicontinuity when $\alpha\to0$}
\begin{lem}
Let the assumptions of Theorems \ref{thm:3.1} and \ref{thm:4.1} be valid. Then any full trajectory $(u(t),u_t(t))$ that belongs to the global attractor $\mathcal{A}_{\alpha}$ enjoys the following regularity properties,
\[ u_t\in L^{\infty}(\mathbb{R};D(A^{1/2}))\cap C(\mathbb{R};L^2(\Omega)),\ \ u_{tt}\in L^{\infty}(\mathbb{R};H_{\alpha,\theta}).  \]
Specifically, there exists $R>0$ such that
\[ \|A^{1/2}u_t(t)\|^2_2+\|u_{tt}(t)\|^2_2+\alpha\|A^{\theta/4}u_{tt}\|^2_2\le R^2,\ \ t\in \mathbb{R}, \]
where $R$ depends on the radius of $B_0\subset\mathcal{H}$ $(B_0$ is the absorbing set of the system $(\mathcal{H},S(t)))$. In addition, the global attractor $\mathcal{A}_{\alpha}$ lies in $D(A)\times D(A^{1/2})$.
\label{thm:5.2}
\end{lem}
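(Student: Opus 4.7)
The plan is to prove, for any full trajectory $(u(t),u_t(t))\subset\mathcal{A}_{\alpha,\theta}$, that the second-order energy
\[ \Phi(t) := \|A^{1/2}u_t(t)\|_2^2 + \|u_{tt}(t)\|_2^2 + \alpha\|A^{\theta/4}u_{tt}(t)\|_2^2 + M\bigl(\|A^{1/4}u(t)\|_2^2\bigr)\|A^{1/4}u_t(t)\|_2^2 \]
is uniformly bounded on $\mathbb{R}$ by a constant $R^2$ depending only on the radius $R_0:=\|B_0\|_{\mathcal{H}}$ of the common absorbing ball in Remark \ref{rem:1}. This bound will then be independent of $\alpha\in[0,1]$, which is the key feature required for the upper semicontinuity part of Theorem \ref{thm:4.2}.

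First I would differentiate (\ref{eq:2.1}) in time, set $v=u_t$, and take the $L^2$ inner product with $v_t$, reproducing the identity (\ref{eq:3.18})--(\ref{eq:3.19}) in the form
\[ \tfrac{1}{2}\tfrac{d}{dt}\Phi(t) + N_0\|A^{\theta'/4}v_t(t)\|_2^2 \le \sum_{j=2}^{5} I_j(t). \]
To turn this into a strictly dissipative inequality I would introduce the perturbation $\Psi(t):=(v_t,v)+\alpha(A^{\theta/4}v_t,A^{\theta/4}v)$, modeled on the functional used in Proposition \ref{thm:dis}, and set $\Phi_\epsilon:=\Phi+\epsilon\Psi$. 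Following the multiplier computation of (\ref{eq:4.8})--(\ref{eq:4.9}), $\tfrac{d}{dt}\Psi$ contributes a term $-\Phi$ plus remainders. The remainders $I_2,I_3,I_5$ are bounded exactly as in (\ref{eq:3.19}), while the supercritical term $I_4=-\int f'(u)\,v\,v_t\,dx$ is controlled via (\ref{eq:3.2}), generalized H\"older, and $V\hookrightarrow L^{p+2}$ by
\[ |I_4| \le \sigma_1\bigl(|\Omega|^{p/(2(p+2))}+\|u\|_{p+2}^{p/2}\bigr)\|v\|_{p+2}\|v_t\|_2 \le C_{R_0}\bigl(\|A^{1/2}v\|_2^2+\|v_t\|_2^2\bigr), \]
where the $L^{p+2}$-norm of $u$ is absorbed into a constant depending only on $R_0$ through $u\in D(A^{1/2})$ — exactly the information available on the attractor. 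Choosing $\epsilon$ small enough yields
\[ \tfrac{d}{dt}\Phi_\epsilon(t) + c\,\Phi_\epsilon(t) \le K, \]
with constants $c,K>0$ that depend only on $R_0$ and are uniform in $\alpha\in[0,1]$.

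To obtain the uniform bound on a full trajectory in $\mathcal{A}_{\alpha,\theta}$, I would exploit density of $D(A)\times D(A^{1/2})$ in $\mathcal{H}$ together with continuous dependence (Theorem \ref{thm:3.1} (iii)). Fix $t_0\in\mathbb{R}$ and $s<t_0$, and approximate $(u(s),u_t(s))\in\mathcal{A}_{\alpha,\theta}$ in $\mathcal{H}$-norm by smooth data $(u_0^k,u_1^k)\in D(A)\times D(A^{1/2})$. The corresponding strong solution $z^k$ has finite $\Phi_\epsilon^k(s)$ and satisfies the dissipative inequality, so
\[ \Phi^k(s+\tau) \le 2e^{-c\tau}\Phi_\epsilon^k(s) + 2K/c,\quad \tau\ge0. \]
Setting $\tau=t_0-s$, passing $k\to\infty$ (continuous dependence identifies $z^k(t_0)$ with $(u(t_0),u_t(t_0))$ in $\mathcal{H}$), and then sending $s\to-\infty$ kills the first term through the exponential factor, giving $\Phi(t_0)\le 2K/c$, which is the sought $R^2$. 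This delivers $u_t\in L^\infty(\mathbb{R};D(A^{1/2}))$ and $u_{tt}\in L^\infty(\mathbb{R};H_{\alpha,\theta})$; continuity $u_t\in C(\mathbb{R};L^2(\Omega))$ follows by standard interpolation with (\ref{eq:3.6}). Solving (\ref{eq:2.1}) for $Au$ and observing that every remaining term lies in $L^2(\Omega)$ uniformly in $t$ (using $u_t\in D(A^{1/2})\subset D(A^{\theta'/2})$, the bounds on $M,N$, and the Sobolev estimate of $f(u)$) then yields $u\in L^\infty(\mathbb{R};D(A))$, hence $\mathcal{A}_{\alpha,\theta}\subset D(A)\times D(A^{1/2})$.

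The hardest technical step will be justifying the backward-time limit $s\to-\infty$ while keeping the constants independent of the approximation index $k$. This rests entirely on $c$ and $K$ in the dissipative inequality depending only on $R_0$, so that the factor $e^{-c(t_0-s)}$ can swallow any initial growth of $\Phi_\epsilon^k(s)$. That requirement forces all nonlinear estimates, and most notably the supercritical $I_4$ above, to close using only the first-order attractor bound $\|A^{1/2}u\|_2\le R_0$ — never any a priori higher regularity of the solutions themselves — which is the point of the careful H\"older-Sobolev bookkeeping in the second paragraph.
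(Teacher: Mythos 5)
Your overall strategy (a second--order energy, a linear perturbation $\Psi$, exponential decay backward along the trajectory) is not the paper's route, and as written it has a genuine gap at its central step: the claimed dissipative inequality $\frac{d}{dt}\Phi_\epsilon+c\,\Phi_\epsilon\le K$ does not follow from the estimates you invoke. The bounds on $I_2,\dots,I_5$ "exactly as in (\ref{eq:3.19})" give $|I_j|\le C_{R_0}\bigl(\|A^{1/2}u_t\|_2^2+\|u_{tt}\|_2^2\bigr)$ with a constant $C_{R_0}$ of order one (it carries factors like $\|A^{1/4}u\|_2\|A^{1/4}u_t\|_2$ and $\|A^{1/2}u\|_2$ from the first energy bound), i.e.\ $|I_j|\le C_{R_0}\Phi$; these terms are of the \emph{same} order as $\Phi$ and cannot be absorbed by the perturbation, which only contributes $-\epsilon\Phi$ with $\epsilon$ forced to be small. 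The net result of (\ref{eq:3.18})--(\ref{eq:3.19}) is $\frac{d}{dt}\Phi\le C_{R_0}\Phi$, exponential \emph{growth}, which is exactly what makes it usable for local well-posedness and useless for the backward limit $s\to-\infty$: the factor becomes $e^{+C_{R_0}(t_0-s)}$. To rescue your scheme you would have to show that every $I_j$ is bounded by $\eta\Phi+(\text{absorbable damping})+C_{\eta,R_0}$ with $\eta$ arbitrarily small; interpolation does this for $I_3,I_4,I_5$, but $I_2$ produces $\|A^{1/4}u_t\|_2^2\|A^{\theta'/4}u_t\|_2^2\lesssim\|A^{1/2}u_t\|_2^{1+2\theta'}\|u_t\|_2^{3-2\theta'}$ after Young against the damping, which is super-quadratic in $\|A^{1/2}u_t\|_2$ for $\theta'>1/2$ and cannot be closed this way. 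There is a second, independent problem in your approximation argument: for fixed $s$ the smooth approximants have $\Phi_\epsilon^k(s)\to\infty$ as $k\to\infty$ (precisely because you do not yet know $(u(s),u_t(s))\in D(A)\times D(A^{1/2})$), while $e^{-c(t_0-s)}$ is a fixed constant once $s$ is fixed; the exponential cannot "swallow" a quantity that is unbounded in $k$, and you cannot send $s\to-\infty$ first because the approximation lives at time $s$.

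The paper avoids both difficulties by never differentiating the equation. It applies the already--established stability inequality (\ref{eq:4.13}) to the pair consisting of a full trajectory $\gamma$ and its time shift $\gamma_h$, both of which lie on the attractor. The initial--data term then carries the factor $e^{-\delta(t-s)}$ multiplied by a quantity bounded \emph{uniformly in $s$} by the attractor's diameter, so the limit $s\to-\infty$ is legitimate and yields (\ref{eq:5.5}); the surviving term $\sup_\tau(\|A^{1/4}u^h(\tau)\|_2^2+\|u^h(\tau)\|_{p+2}^2)$ is genuinely lower order (after interpolation the $\epsilon\|A^{1/2}u^h\|_2^2$ piece is reabsorbed into the left--hand side and the remainder is controlled by $\|u^h\|_2^2\le\sup\|u_t\|_2^2$ via the mean value theorem), and only then is $h\to0$ taken to produce the bound on $(u_t,u_{tt})$. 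If you want to keep a differential--inequality flavor, you should rework your argument around difference quotients of the trajectory itself and the stability inequality, rather than around the second--order energy identity.
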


\begin{proof}

Let us take the trajectories $\gamma$ and $\tilde{\gamma}$ from the global attractor:
\[ \gamma = \{z(t) \ | \ z(0)=z_0 \in\mathcal{A}_{\alpha,\theta},\ t\in\mathbb{R}\}, \ \ \tilde{\gamma}=\{\tilde{z}(t)\ |\ \tilde{z}(0)=\tilde{z_0} \in \mathcal{A}_{\alpha,\theta},\ t\in\mathbb{R}\} \] 
From the stability inequality, we get
\begin{eqnarray*}
\begin{split}
 \|S_{\alpha,\theta}(t)z_0-S_{\alpha,\theta}(t)\tilde{z_0}\|^2_{\mathcal{H}_{\alpha,\theta}}\le C_{\mathcal{A}_{\alpha,\theta}}\|z_0-\tilde{z_0}\|^2_{\mathcal{H}_{\alpha,\theta}}e^{-\delta t}\qquad\qquad\qquad\qquad\qquad\qquad\qquad\qquad\qquad\qquad\\
 +C_{\mathcal{A}_{\alpha,\theta}}\Bigl ( \int^t_0e^{\delta(t-\tau)}d\tau \Bigr) \sup_{0\le\tau\le t}\Bigl ( \|A^{1/4}w(\tau)\|^2_2+\|w(\tau)\|^2_{p+2} \Bigr ). 
\end{split}
\end{eqnarray*}
Now, we replace $z_0$ (resp. $\tilde{z_0}$) with $S_{\alpha,\theta}(s)z_0$ (resp. $S_{\alpha,\theta}(t)\tilde{z_0})$ and $S_{\alpha,\theta}(t)$ with $S_{\alpha,\theta}(t-s)$ $(s\le t)$. Then we have
\begin{eqnarray}
\begin{split}
\|S_{\alpha,\theta}(t)z_0-S_{\alpha,\theta}(t)\tilde{z_0}\|^2_{\mathcal{H}_{\alpha,\theta}}\le C_{\mathcal{A}_{\alpha,\theta}}\|S_{\alpha,\theta}(s)z_0-S_{\alpha,\theta}(s)\tilde{z_0}\|^2_{\mathcal{H}_{\alpha,\theta}}e^{-\delta(t-s)}\qquad\qquad\qquad\qquad\qquad\\
+C_{\mathcal{A}_{\alpha,\theta}}\Bigl ( \int^t_se^{\delta(t-\tau)}d\tau \Bigr) \sup_{s\le\tau\le t}\Bigl ( \|A^{1/4}w(\tau)\|^2_2+\|w(\tau)\|^2_{p+2} \Bigr ) 
\label{eq:5.4}
\end{split}
\end{eqnarray}
for all $s\le t,\ s,t\in \mathbb{R}$. By letting $s \rightarrow -\infty$ in (\ref{eq:5.4}), we have 
\begin{equation}
 \|S_{\alpha,\theta}(t)z_0-S_{\alpha,\theta}(t)\tilde{z_0}\|^2_{\mathcal{H}_{\alpha,\theta}}\le C_{\mathcal{A}_{\alpha,\theta}}\sup_{-\infty\le\tau\le t}\Bigl ( \|A^{1/4}w(\tau)\|^2_2+\|w(\tau)\|^2_{p+2} \Bigr ) 
 \label{eq:5.5}
\end{equation}
for every $t\in\mathbb{R}$ and for every couple of full trajectories $\gamma$ and $\tilde{\gamma}$ taken from the attractor. Now we fix a trajectory $\gamma$ and we consider the shifted trajectory $\gamma_h :=\{ z(t+h)\ |\  t\in\mathbb{R}\}$ for $0<|h|<1$. Applying (\ref{eq:5.5}) for this pair of trajectories $\gamma$ and $\gamma_h$, we get
\begin{eqnarray}
\begin{split}
\|z^h(t)\|^2_{\mathcal{H}_{\alpha,\theta}}=\|A^{1/2}u^h(t)\|^2_2+\|u^h_t(t)\|^2_2+\alpha\|A^{\theta/4}u^h_t(t)\|^2_2\qquad\qquad\qquad\qquad\qquad\qquad\\
 \le C_{\mathcal{A}_{\alpha,\theta}}\sup_{-\infty\le\tau\le t}\Bigl ( \|A^{1/4}u^h(\tau)\|^2_2+\|u^h(\tau)\|^2_{p+2} \Bigr )
\label{eq:5.6}
\end{split}
\end{eqnarray}
where $z^h(t)=(u^h(t),u^h_t(t)),\ u^h(t)=\{ u(t+h)-u(t) \}\cdot h^{-1}$.%\ u^h_t(t)=\{ u_t(t+h)-u_t(t) \}\cdot h^{-1}$. 

Let us estimate the right-hand side of (\ref{eq:5.6}). From the interpolation inequality, Sovolev's embedding, and Young's inequality,
\begin{eqnarray*}
\|A^{1/4}u^h(\tau)\|^2_2+\|u^h(\tau)\|^2_{p+2}&\le& \Bigl ( \|A^{1/2}u^h(\tau)\|_2\|u^h(\tau)\|_2 \Bigr ) + \Bigl ( C_p\|A^{1/2}u^h(\tau)\|^{2\theta}_2\|u^h(\tau)\|^{2(1-\theta)}_2 \Bigr )\\
                                                          &\le& \epsilon \|A^{1/2}u^h(\tau)\|^2_2+C_{p,\epsilon}\|u^h(\tau)\|^2_2. 
\end{eqnarray*}
From the global estimate (\ref{eq:3.17}) of the solution we have
\[ \|A^{1/4}u^h(\tau)\|^2_2+\|u^h(\tau)\|^2_{p+2}\le \epsilon C_{\mathcal{A}_{\alpha,\theta}}+C_{p,\epsilon}\|u^h(\tau)\|^2_2. \]
Taking $\epsilon>0$ so small that $\epsilon C_{\mathcal{A}_{\alpha}}\le \sup_{-\infty\le\tau\le t} \|u^h(\tau)\|^2_2$, we get
\[ \|A^{1/4}u^h(\tau)\|^2_2+\|u^h(\tau)\|^2_{p+2}\le(1+C)\sup_{-\infty\le\tau\le t} \|u^h(\tau)\|^2_2. \]

\noindent
Using the mean value theorem, we have
\[ \sup_{-\infty\le\tau\le t} \|u^h(\tau)\|^2_2 \le \sup_{-\infty\le\tau\le t} \|u_t(\tau)\|^2_2 \le C_{\mathcal{A}_{\alpha,\theta}}. \]

\noindent
Thus passing with the limit on $h\to0$ in (\ref{eq:5.6}), we obtain
\[ \|A^{1/2}u_t(t)\|^2_2+\|u_{tt}(t)\|^2_2+\alpha\|A^{\theta/4}u_{tt}(t)\|^2_2\le C_{\mathcal{A}_{\alpha,\theta}}. \]

\noindent
From Remark \ref{rem:1}, the constant $C_{\mathcal{A}_{\alpha,\theta}}$ only depend on the radius of $B_0$ and does not depend on $\alpha$. Finally, it is easy to show that $\mathcal{A}_{\alpha,\theta}$ lies in $D(A)\times D(A^{1/2})$. Actually, in the weak formulation we substitute $Au(t)$ for $\omega$, we have
\begin{equation*}
\|Au(t)\|_2\le C_{B_0}.
\end{equation*}
Therefore, we get the conclusion on the regularity of trajectories from the attractor. 
\end{proof}

From the results of this section, we can conclude the upper semicontinuity of $\mathcal{A}_{\alpha,\theta}$. Indeed, from Lemma \ref{thm:5.2} every attractor $\mathcal{A}_{\alpha,\theta}$ is included in a bounded set $K\subset D(A)\times D(A^{1/2})$, that is,

\[ \|Au^{\alpha}\|^2_2+\|A^{1/2}v^{\alpha}\|^2_2\le C, \] 

\noindent
where $C$ does not depend on $\alpha$ and $t$. Since $D(A)\times D(A^{1/2})$ is compactly embedded into $D(A^{1/2})\times L^2(\Omega)$, the set $K$ is compact in $D(A^{1/2})\times L^2(\Omega)$. Thus we are able to obtain the conclusion from the Proposition \ref{prop:upp}.

\subsection{Continuity of exponential attractors when $\alpha\to0$}
At the end of this section, we refer to the continuous of exponential attractors on parameter $\alpha$. Before getting into the proof, we note the properties of the system we shall use here:
\begin{itemize}
\item $\widetilde{\mathcal{H}}:=D(A)\times D(A^{1/2})$ is dense in $\mathcal{H}_{\alpha,\theta}$ 
\item $S_{\alpha,\theta}(T)$ converges to $S_0(T)$ as $\alpha\to0$ with the rate
\begin{equation*}
\sup_{z\in B}\|S_{\alpha,\theta}(T)z-S_0(T)z\|_{\mathcal{H}}\le C\alpha,
\end{equation*}
where $C>0$ is a positive constant and $B$ is an arbitrary  bounded set in $\widetilde{\mathcal{H}}$.
\item $\mathcal{B}$ is a bounded absorbing set for every system $(\mathcal{H}_{\alpha,\theta},S_{\alpha,\theta}(t))$.
\item Stability inequality holds for every system $(\mathcal{H}_{\alpha,\theta},S_{\alpha,\theta}(t))$:
\begin{equation*}
\|S_{\alpha,\theta}(T)z^1-S_{\alpha,\theta}(T)z^2\|_{\mathcal{H}_{\alpha,\theta}}\le C_Be^{-\delta T}\|z^1-z^2\|_{\mathcal{H}_{\alpha,\theta}}+\phi_T(z^1,z^2),
\end{equation*}
where $\phi_T(z^1,z^2)=\sup_{t\in[0,T]}\|u^1(t)-u^2(t)\|^{\Theta}_2$
\end{itemize}
Using the above properties, we can construct a family of exponential attractors $\{\mathcal{A}_{\exp,\alpha}\}_{\alpha\in[0,1]}$, for which the estimate 
\begin{equation*}
H(\mathcal{A}_{\exp.\alpha},\mathcal{A}_{\exp.0}) \equiv \max \{ h(\mathcal{A}_{\exp.\alpha},\mathcal{A}_{\exp.0}),h(\mathcal{A}_{\exp,0},\mathcal{A}_{\exp.\alpha}) \} \le C\alpha^{\rho}
\end{equation*}
holds with some exponent $0<\rho<1$ and some constant $C>0$.

\begin{proof}
Firstly, we recall the proofs for the contractivity of the flow $S_{\alpha}$ and construction of discrete exponential attractor. In what follows, we use the same notations in the subsection 4.3. In a similar way of the proof, we can construct the cover of the set $\mathcal{B}$ as follow:

\begin{equation*}
\mathcal{B}\subset \bigcup^k_{i=1}\bigcup^{m_{T,\epsilon'}}_{j=1}(B_i\cap D^T_{j,\alpha}),
\end{equation*}

\noindent
where

\begin{equation*}
D^T_{j,\alpha}\equiv D^T_{j,\alpha}(u_j)=\{ z\in\mathcal{B} : \|u-u_j\|_{C([0,T];L^2(\Omega))}<\epsilon, \ u \text{ is the first component of } S_{\alpha}(\cdot)z \}.
\end{equation*}

\noindent
Then, we can get the cover for $\mathcal{B}$ such as 

\begin{equation*}
\mathcal{B}\subset \bigcup^k_{i=1}\bigcup^{m_{T,\epsilon'}}_{j=1}\bigcup^{m_{T,\epsilon'}}_{l=1}(B_i\cap D^T_{j,\alpha}\cup D^T_l)
\end{equation*}

\noindent
and we can construct a discrete exponential attractor for the system $(\mathcal{B},V_{\alpha})$:

\begin{equation*}
\mathcal{A}^*_{\alpha}:=\Bigl(\bigcap_{m\ge0}V^m_{\alpha}\mathcal{B}\Bigr)\bigcup\ \Bigl(\bigcup_{k,r\ge0}V^k_{\alpha}E^r_{\alpha}\Bigr), 
\end{equation*}

\noindent
where $E^r_{\alpha}$ is the set of an element of covering balls of $V^r_{\alpha}\mathcal{B}$:

\begin{equation*}
E^r_{\alpha}=\ \bigr\{ V^r_{\alpha}z^r_{i,j,l} : 1\le i\le n(\mathcal{B},\kappa(\mathcal{B})),\ 1\le j \le m_{r,\nu},\ z^r_{i,j,l}\in B_i\cap D^r_{\alpha,j} \cap D^r_l \bigl\} %\quad \text{diam}V^l_{\alpha}\bigr(B_i\cap D^{l}_j\bigl) \le 2\kappa(\mathcal{B})q^l,
\end{equation*} 

\noindent
(we note that we can take $z^r_{i,j,l}$ from $\widetilde{\mathcal{H}}$ by the density argument).

Then the discrete exponential attractor satisfies the conditions $H(\mathcal{A}^*_{\exp,0},\mathcal{A}^*_{\exp.\alpha}) \le C\alpha^{-\frac{\log q}{cT-\log q}}$ (deriving this estimate is almost the same as the proof in the paper \cite{M. Efendiev and A. Yagi}, thus we omit the detail here). It is easy to check the same condition holds on exponential attractors $\{ \mathcal{A}_{\exp,\alpha} \}_{0\le\alpha\le1}$. Therefore we obtain the conclusion $H(\mathcal{A}_{\exp.\alpha},\mathcal{A}_{\exp.0}) \le C\alpha^{\rho}$ with $\rho=-\frac{\log q}{cT-\log q}$.

\end{proof}

\section*{Acknowledgments}
I'm grateful to Professor Hideo Kubo for a lot of instructions and encouragements. I also would like to thank Professor Syuichi Jimbo and Professor Nao Hamamuki for giving me many comments. Finally I thank many people who helped me during the preparation of this paper.

\end{document}